\numberwithin{figure}{section}
\numberwithin{equation}{section}
\newcommand{\RootPath}{.}
\newcommand{\ExternalFiguresPath}{\RootPath}
\newcommand{\eop}{\hspace*{\fill}~$\square$} 
\newtheorem{theorem}{Theorem}[section]
\newtheorem{proposition}[theorem]{Proposition}
\newtheorem{lemma}[theorem]{Lemma}
\newtheorem{Conjecture}[theorem]{Conjecture}
\newtheorem*{theorem*}{Theorem}
\newtheorem*{proposition*}{Proposition}
\newtheorem*{lemma*}{Lemma}
\newtheorem*{corollary*}{Corollary}
\newtheorem*{Conjecture*}{Conjecture}
\newenvironment{conjecture*}{\begin{Conjecture*}}{\eop\end{Conjecture*}}
\newtheorem{Problem}[theorem]{Problem}
\newtheorem{Problems}[theorem]{Problems}
\newtheorem*{Problem*}{Problem}
\newenvironment{problem*}{\begin{Problem*}}{\eop\end{Problem*}}
\newtheorem*{Problems*}{Problems}
\newenvironment{problems*}{\begin{Problems*} {\ } \hspace{-.5cm} \begin{enumerate}}{\end{enumerate}\eop\end{Problems*}}
\newtheorem*{Exercise*}{Exercise}
\newenvironment{exercise*}{\begin{Exercise*}}{\eop\end{Exercise*}}
\newcommand{\mycaption}[1]{\centering{\vspace{\medskipamount}\refstepcounter{figure}\textbf{Figure~\thefigure.} {#1}}}
\newcommand{\natur}{\ensuremath{\mathbb{N}}}
\newcommand{\real}{\ensuremath{\mathbb{R}}}
\newcommand{\integer}{\ensuremath{\mathbb{Z}}}
\newcommand{\sprod}[2]{\left<#1,#2\right>}
\newcommand{\setcond}[2]{\left\{ #1 : #2 \right\}}
\newcommand{\MxColTwo}[2]{
    \begin{bmatrix}
        | & |  \\
      {#1} & {#2}  \\
       | & | 
    \end{bmatrix}
}
\newenvironment{FigTab}[2]{
	\begin{figure}[htb]
	\setlength{\unitlength}{#2}
	\begin{center}
	\begin{tabular}{#1}
}{
    \end{tabular}
    \end{center}
    \end{figure}
}
\newcommand{\ExternalFigure}[2]{
        \includegraphics[width=#1\textwidth]{\ExternalFiguresPath/{#2}}
}
\newcommand{\IncludeGraph}[2]{
	\includegraphics[#1]{\ExternalFiguresPath/{#2}}
}
\newcommand{\range}[2]{{#1},\ldots,{#2}}
\newcommand{\conv}{\mathop{\mathrm{conv}}\nolimits}
\newcommand{\lin}{\mathop{\mathrm{lin}}\nolimits}
\newcommand{\aff}{\mathop{\mathrm{aff}}\nolimits}
\newcommand{\intr}{\mathop{\mathrm{int}}\nolimits}
\newcommand{\bd}{\mathop{\mathrm{bd}}\nolimits}
\newcommand{\cl}{\mathop{\mathrm{cl}}\nolimits}
\newcommand{\E}{\mathop{\mathbb{E}}\nolimits}
\newcommand{\sign}{\mathop{\mathrm{sign}}\nolimits}
\begin{document}

\newcommand{\skprj}[2]{\Pi_{#1}^{#2}} 
\newcommand{\Wlg}{Without loss of generality\xspace} 
\newcommand{\wlg}{without loss of generality\xspace}
\renewcommand{\iff}{if and only if\xspace}
\newenvironment{InlineMx}{\left[\begin{smallmatrix}}{\end{smallmatrix}\right]}
\newenvironment{Mx}{\begin{bmatrix}}{\end{bmatrix}}
\newcommand{\IdMx}{E}
\newcommand{\ZMx}{O}
\newcommand{\Rot}{\mathop{\mathcal R}\nolimits}
\newcommand{\RMx}{\Rot}
\newcommand{\MxL}{\left[}
\newcommand{\MxR}{\right]}
\newcommand{\MxSeq}[3]{\MxL{#1}\MxR_{#2}^{#3}} 
\newcommand{\IDKO}{\intr \supp {g_K} \setminus \{o\}} 
\newcommand{\supp}{\mathop{\mathrm{supp}}\nolimits}

\newcommand{\Bcal}{\mathop{\mathcal B}\nolimits}
\newcommand{\hdmeas}{\mathcal{H}}
\newcommand{\eps}{\varepsilon}
\newcommand{\usphere}{\mathop{\mathbb{S}}\nolimits}
\newcommand{\uball}{\mathop{\mathbb{B}}\nolimits}
\newcommand{\XX}{\mathop{\mathcal X}\nolimits}
\newcommand{\Ccal}{\mathop{\mathcal C}\nolimits}
\newcommand{\cclass}{\mathop{\mathrm C}\nolimits} 
\newcommand{\bdclass}{\mathcal{C}} 
\newcommand{\Ucal}{\mathop{\mathcal U}\nolimits}
\newcommand{\dd}{\mathrm{d}}
\newcommand{\dotvar}{\,.\,}
\newcommand{\transp}{\top}
\newcommand{\hess}{\nabla^2}
\newcommand{\impl}[2]{\eqref{#1} $\Longrightarrow$ \eqref{#2}}
\newcommand{\ThmSource}[1]{\emph{(#1).}}
\newcommand{\ThmTitle}[2][]{\ifthenelse{\equal{#1}{}}{\emph{(#2)}}{\emph{(#2; #1)}}}
\newcommand{\notion}[2][]{\emph{#2}\xspace} 
\newcommand{\dist}{\mathop{\mathrm{dist}}}

\newcommand{\mycite}[2]{\ifthenelse{\equal{#2}{}}{\cite{#1}}{\cite[#2]{#1}}\xspace}
\newcommand{\MatheronReprint}[1][]{\mycite{Matheron8601}{#1}}
\newcommand{\MatRepr}[1][]{\mycite{Matheron8601}{#1}} 
\newcommand{\SchnBk}[1][]{\mycite{MR94d:52007}{#1}} 
\newcommand{\OrMatrBk}[1][]{\mycite{MR1744046}{#1}} 
\newcommand{\CvBstKnown}[1][]{\mycite{MR2108257}{#1}} 
\newcommand{\GardZhang}[1][]{\mycite{MR1623396}{#1}}
\newcommand{\BMIneqPaper}[1][]{\mycite{MR1898210}{#1}}
\newcommand{\CovBstKnown}[1][]{\mycite{MR2108257}{#1}}
\newcommand{\Nagel}[1][]{\mycite{MR1232748}{#1}}
\newcommand{\VolIntersect}[1][]{\mycite{MR1260892}{#1}}

\newcommand{\G}{G}
\newcommand{\hessg}{\det G}
\newcommand{\detu}[2]{\det(u_{#1},u_{#2})}
\newcommand{\detuvar}[3]{\det(u_{#1}(#3),u_{#2}(#3))}

\title{Confirmation of Matheron's Conjecture\\ on the Covariogram of  a Planar Convex Body}
\date{\small \today}
\author{\small Gennadiy Averkov and Gabriele Bianchi}
\maketitle

\begin{abstract}
The covariogram $g_K$ of a convex body $K$ in $\E^d$ is the function
which associates to each $x \in \E^d$ the volume of the intersection of
$K$ with $K+x$. In 1986 G.~Matheron conjectured that for $d=2$ the covariogram
$g_K$ determines $K$ within the class of all planar convex
bodies, up to translations and reflections in a point.
This problem is equivalent to some problems in stochastic geometry and
probability as well as to a particular case of the phase retrieval problem
in Fourier analysis. It is also relevant for the inverse
problem of determining the atomic structure of a quasicrystal from its
X-ray diffraction image.
In this paper we confirm Matheron's conjecture completely.

\newtheoremstyle{itsemicolon}{}{}{\mdseries\rmfamily}{}{\itshape}{.}{ }{}
\theoremstyle{itsemicolon}
\newtheorem*{msc*}{2000 Mathematics Subject Classification}

\begin{msc*}
	Primary 60D05; Secondary 52A10, 52A22, 52A38, 42B10
\end{msc*}

\newtheorem*{keywords*}{Key words and phrases}

\begin{keywords*}
	Autocorrelation, covariogram,  cut-and-project scheme, geometric tomography, image analysis, phase retrieval, quasicrystal, set covariance
\end{keywords*}
\end{abstract}

\section{Introduction}

Let $C$ be a compact set in the Euclidean space $\E^d, \ d \ge 2.$ The \notion{covariogram} $g_C$ of  $C$ is the function on $\E^d$ defined by
\begin{equation}
	g_C(x) := V_d(C \cap (C+x)), \qquad x \in \E^d, \label{cov:def:eq}
\end{equation}
where $V_d$ stands for the $d$-dimensional Lebesgue measure. This function, which was introduced by G.~Matheron in his book
\cite[Section~4.3]{MR0385969} on random sets, is also called \notion{set
covariance}. The covariogram $g_C$ coincides with the  \notion{autocorrelation} of the characteristic function $\mathbf{1}_C$ of $C,$ i.e.:
\begin{equation}\label{convoluzione}
g_C =\mathbf{1}_C\ast \mathbf{1}_{(-C)}.
\end{equation}
The covariogram $g_C$ is clearly unchanged with respect to  translations and reflections 
of $C$, where, throughout the paper,  \emph{reflection} means reflection 
in a point. A \notion{convex body} in $\E^d$ is a convex compact set with nonempty interior. In 1986 Matheron~\cite[p.~20]{Matheron8601}  asked the following question and conjectured a positive answer for the case $d=2$.

\newtheorem*{covproblem*}{Covariogram Problem}
\begin{covproblem*}
Does the covariogram determine a convex body in $\E^d,$ among all convex bodies, up to translations and reflections?
\end{covproblem*}

We are able to confirm Matheron's conjecture completely. 
\begin{theorem} \label{MatConjConf}
	Every planar convex body is determined within all planar convex bodies by its covariogram, up to translations and reflections.
\end{theorem}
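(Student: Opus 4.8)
The plan is to combine the partial confirmations of Matheron's conjecture already available with a new analysis of the one class of bodies they do not cover, showing that there too the covariogram forces the body rigidly. The starting point is the geometric information carried by $g_K$. Since $g_K=\mathbf{1}_K\ast\mathbf{1}_{-K}$, one has $\widehat{g_K}=|\widehat{\mathbf{1}_K}|^{2}$, so the problem is equivalently a two-dimensional phase-retrieval problem; but the workable handle is geometric. The support of $g_K$ is the difference body $DK:=K+(-K)$, so $g_K$ determines $DK$. Restricting $g_K$ to a line through the origin gives $g_K(tu)=\int_{\real}\max(0,\,c_u(s)-|t|)\,\dd s$, where $c_u(s)$ is the length of the chord of $K$ in direction $u$ at ``height'' $s$; hence the measure $\partial_t^{2}g_K(tu)$ $(t>0)$ recovers the chord-length distribution of $K$ in direction $u$, for every $u\in\usphere^{1}$. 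Finally, the first- and second-order behaviour of $g_K$ as one approaches a point of $\partial(DK)$ from inside $DK$ encodes the local shape of $\partial K$ near the pair of boundary points of $K$ with outer normals $u$ and $-u$ (this pair being unique for all but countably many $u$). On exactly this kind of data the conjecture is already known: for convex polygons (Nagel, with the uniqueness completed by Bianchi), for $\partial K$ of class $C^{2}_{+}$ (Bianchi, Segala and Vol\v{c}i\v{c}), and, by Bianchi's later work, for further classes, among them bodies with $C^{1}$ boundary. What remains is the genuinely mixed situation: bodies whose boundary contains at least one line segment but which are not polygons, together with the analogous configurations in which corners of $\partial K$ sit next to curved arcs. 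It suffices to settle this class.

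So fix such a $K$ and decompose $\partial K$ into its finitely many maximal boundary segments $E_1,\dots,E_m$ and the complementary relatively open arcs $A_1,\dots,A_m$, each $A_j$ containing no segment; let $L$ be a planar convex body with $g_L=g_K$. The first task is to match the gross structure: from $DL=DK$ and from the segment data read off above --- the flat portions of $\partial(DK)$, each of which reflects a segment of $\partial K$ (possibly a combined parallel pair), together with the chord-length distributions in those directions --- one shows that $L$ too lies in the mixed class and has the same collection of boundary-segment directions and lengths as $K$; after replacing $L$ by $-L$ if necessary and translating, we may assume the segments of $L$ coincide with $E_1,\dots,E_m$. The core of the proof is then a reassembly argument. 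The covariogram determines, for almost every direction $u$, only the \emph{unordered} pair of germs of $\partial K$ at the two points with outer normals $\pm u$; a priori it does not say how these germs are strung together along a single convex curve --- this is the phase-retrieval ambiguity in disguise --- and one must show that the stringing is nevertheless forced. I would start from a position where the pairing is unambiguous (a corner of $\partial K$, or a prescribed endpoint of a longest segment), identify the corresponding germ of $\partial L$ with that of $\partial K$ there, and propagate the identification as the outer normal rotates through $\usphere^{1}$: convexity, i.e.\ monotonicity of the Gauss map, forces the paired germs to occur in the same cyclic order on $\partial K$ as on $\partial L$, so the local identifications patch into an isometry $\partial K\to\partial L$ and hence $L=K$ up to the reflection and translation already made.

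The main obstacle is the single configuration in which this propagation need not be forced: when a boundary segment $E_i$ of $\partial K$ has the same direction as a sub-arc of some $A_j$ lying on the opposite side of $K$, the covariogram data near the corresponding stretch of $\partial(DK)$ is compatible with two distinct pairings of germs, and no purely local consideration chooses between them. Overcoming this is the crux of the argument, and it must be done globally. I would rule out the spurious reassembly by testing it against quantities that $K$ and the hypothetical $L$ must share but that the two candidate reassemblies treat differently: the values $g_K(x)=V_2(K\cap(K+x))$ for vectors $x$ chosen to straddle the ambiguous segment, cumulative length- or width-type functions along $\partial(DK)$, or simply the requirement that, with the segments $E_i$ already pinned down, the arcs $A_j$ close up into a convex loop in only one way. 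Showing that every spurious reassembly violates one of these global constraints forces $L=K$ up to translation and reflection, which is Matheron's conjecture.
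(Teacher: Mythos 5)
There is a genuine gap, and it lies at the very first structural step: you have misidentified the class of bodies that remains open. Bianchi's result (the paper's Theorem~\ref{IrregThm}) says that if two convex bodies have equal covariograms and \emph{one of them fails} to be strictly convex or fails to be $\cclass^1$ regular, then they are already translates or reflections of each other. In other words, the ``mixed'' bodies you propose to study --- boundaries containing segments, or corners adjacent to curved arcs --- are precisely the bodies already covered by the known results, and your claim that the conjecture was previously settled for ``bodies with $C^1$ boundary'' is false: the case that remained open, and that this paper resolves, is exactly the class of strictly convex, $\cclass^1$ regular bodies with no curvature hypothesis (i.e.\ not covered by the $\mathcal{C}^2_+$ theorem of Bianchi, Segala and Vol\v{c}i\v{c}). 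So your entire analysis is aimed at a class where nothing new is needed, while the genuinely hard class is left untouched. Note also that for strictly convex bodies $DK$ has no flat portions and $\bd K$ has no segments, so none of the ``segment-matching'' machinery you set up even applies there.

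Beyond the misdirected target, the step you yourself call the crux --- ruling out the ``spurious reassembly'' of germ pairs by appealing to unspecified global constraints --- is only a plan, not an argument. The analogous synchronization problem is the real content of the paper: for strictly convex $\cclass^1$ bodies one knows from $\nabla g_K$ only the inscribed parallelograms $P(K,x)$ up to an $x$-dependent translation, and the whole difficulty is to show that this translation can be chosen independently of $x$. The paper does this with substantial new machinery (Matheron's second-derivative identities, the relation $u_1^\transp \G^{-1}u_3=0$, the Monge--Amp\`ere characterization of central symmetry, the eigenvector lemma for $\G(x)\G(y)^{-1}$, and the hexagon criterion of Proposition~\ref{HexDetermProp}), after which an arc of $\bd K$ is transplanted into $\bd L$ and Proposition~\ref{FromArcsToBodies} finishes the proof. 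Your proposal contains no substitute for any of these steps.
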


It is known that the covariogram problem is equivalent to any of the following problems.
\begin{enumerate}[{P}1.]
\item \label{chord_lengths} Determine a convex  body $K$ by the knowledge, for each unit vector $u$ in $\E^d$, of the distribution of the lengths of the chords of $K$ parallel to $u$. 
\item \label{distribution_XY} Determine a convex  body $K$ by the  distribution of $X-Y$, where $X$ and $Y$ are independent random variables uniformly distributed over $K$.
\item \label{phase_retrieval} Determine  the characteristic function $\mathbf{1}_K$ of a convex body $K$ from  the modulus of its Fourier transform $\widehat{\mathbf{1}_K}$.
\end{enumerate}

In view of Theorem~\ref{MatConjConf}, for Problems P\ref{chord_lengths} and P\ref{distribution_XY} the determination holds within the class of planar convex bodies and for Problem P\ref{phase_retrieval} within the class of characteristic functions of planar convex bodies. In each of the three problems the determination is unique up to translations and reflections of the body.

The equivalence of the covariogram problem and P\ref{chord_lengths} was observed by Matheron, who showed in \cite[p.~86]{Matheron8601} that the derivatives $\frac{\dd}{\dd r} g_K(ru),$ for all $r>0$, yield the
distribution of the lengths of the chords of $K$ parallel to $u$.
Blaschke \cite[\S4.2]{MR2162874} asked whether the distribution of
the lengths of \emph{all} chords (that is, not separated direction by direction) of a planar convex body determines that body, up to isometries in $\E^2.$ Mallows and Clark~\cite{MR0259976}
constructed polygonal examples that show that the answer is negative in general. Gardner, Gronchi, and Zong  \cite{MR2160045} observed that the distribution of
the lengths of the chords of $K$ parallel to $u$ coincides, up to a
multiplicative factor, with the \notion{rearrangement} of the X-ray of $K$ in
direction $u$, and rephrased P\ref{chord_lengths} in these terms. Chord-length distributions  are of wide interest beyond mathematics, as Mazzolo, Roesslinger, and Gille \cite{MR2023576} describe. See also
Schneider \cite{MR1196706} and Cabo and Baddeley \cite{MR1974301}. 

Problem P\ref{distribution_XY} was asked by Adler and Pyke~\cite{AP91} in 1991; see also \cite{MR1450931}. Its equivalence to the covariogram problem comes from the observation that the convolution in \eqref{convoluzione} is the probability density of $X-Y$, up to a multiplicative factor.

Problem P\ref{phase_retrieval} is a special case of the \emph{phase retrieval problem}, where $\mathbf{1}_K$ is replaced by a function with compact support. The phase retrieval problem has applications in \emph{X-ray crystallography, optics, electron microscopy} and other areas, references to which may be found in \cite{MR1938112}. 
The equivalence of the covariogram problem and P\ref{phase_retrieval} follows by applying the Fourier transform to \eqref{convoluzione} and using the
relation $\widehat{\mathbf{1}_{(-K)}}=\overline{\widehat{\mathbf{1}_K}}$.

Recently, Baake and Grimm \cite{BaakeGrimm} have observed that the covariogram problem is relevant for the inverse problem of finding the atomic structure of a \notion{quasicrystal} from  its \notion[X-ray diffraction image]{X-ray diffraction image}.
It turns out that quasicrystals can often be described by means of the so-called \notion[cut-and-project scheme]{cut-and-project scheme}; see \cite{MR2084582}. In this scheme a quasiperiodic discrete subset $S$ of $\E^d,$ which models the atomic structure of a quasicrystal, is described as the canonical projection of $Z \cap (\E^d \times W)$ onto $\E^d,$ where $W$ (which is called \emph{window})  is a subset of $\E^n, \ n \in \natur,$ and $Z$ is a lattice in $\E^d \times \E^n.$  For many quasicrystals, the lattice $Z$ can be recovered from the diffraction image of $S.$ Thus, in order to determine $S,$ it is necessary to know $W$. The covariogram problem enters at this point, since $g_W$ can be obtained from the diffraction image of $S.$ Note that the set $W$ is in many cases a convex body.

In \cite[Theorem~6.2 and Question~6.3]{MR1623396}  the covariogram problem was transformed to a question for the so-called \notion{radial mean bodies}. 

A planar convex body $K$ can be determined by its covariogram in a 
class $\mathcal C$ of  sets which is much larger than that of
convex bodies.  This is a consequence of Theorem~\ref{MatConjConf} and of a result of
Benassi, Bianchi, and D'Ercole \cite{BenassiBianchiDErcole}. In  \cite{BenassiBianchiDErcole} the class $\mathcal C$ is defined  and it is proved that a body $C\in\mathcal C$ whose
covariogram is equal to that of a convex body is necessarily convex. 
However, in Theorem~\ref{MatConjConf} the assumption that $K$ is convex is crucial, since there exist examples of non-convex sets which are neither translations nor reflections of each other and have equal covariograms; see~\cite{MR2160045}, Rataj \cite{MR2040232}, and \cite{BenassiBianchiDErcole}. 

The first partial solution of Matheron's  conjecture was given by Nagel \cite{MR1232748} in  1993, who confirmed it for all convex polygons. 
% Schmitt \cite{MR1196706}, in the same year, confirmed the conjecture for a class of planar polygons, which includes some non-convex ones, but does not contain convex polygons with parallel edges.  
Schmitt \cite{MR1196706}, in the same year, gave a constructive proof of the determination of each set in a suitable class of polygons by its covariogram. This class  contains each convex polygon without parallel edges and also some non-convex polygons.
In 2002 Bianchi, Segala and Vol\v{c}i\v{c} \cite{MR1938112} gave a positive answer to the covariogram problem for all planar convex bodies whose boundary has strictly positive continuous curvature.
Bianchi \cite{MR2108257} proved a common generalization of this and Nagel's result.
In  \cite{AverkovBianchi0306} the authors of this paper studied how much of the covariogram data is needed for the uniqueness of the determination, and also extended the class of bodies for which the conjecture was confirmed.

The covariogram problem in the general setting has negative answer, as Bianchi \cite{MR2108257} proved by finding counterexamples in $\E^d$ for every $d\geq 4$. For other results in dimensions higher that two we refer to Goodey, Schneider, and Weil \cite[p.~87]{MR1416411}, and \cite{Bianchi0602}.
In \cite{Bianchi0602} it is proved that a convex three-dimensional polytope is determined by its covariogram. This proof requires the following generalization of the covariogram problem. The \emph{cross covariogram} of two convex bodies $K$ and $L$
in $\E^2$ is the function defined for each $x\in\E^2$ by
$g_{K,L}(x):=V_2(K\cap(L+x)).$ Bianchi \cite{Bianchi0602} proves that if $K$ and
$L$ are convex polygons, then $g_{K,L}$ determines \emph{both $K$ and $L$ }, with exclusion of a completely described family of exceptions. The family of exceptions is
composed of pairs of parallelograms.

In view of results from \cite{MR2108257}, for proving Theorem~\ref{MatConjConf} it suffices to derive the following statement. 

\begin{proposition} \label{ArcDetermProp}
	Let $K$ and $L$ be planar strictly convex and $\cclass^1$ regular bodies with equal covariograms. Then $L$ possesses a non-degenerate boundary arc whose translation or reflection lies in the boundary of $K.$ 
\end{proposition}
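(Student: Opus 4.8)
\emph{Reduction to a local problem.}
Since $g_K=g_L$, these functions have the same support, hence the difference bodies coincide: $DK:=K+(-K)=L+(-L)=:D$. As $K$ and $L$ are strictly convex and $\cclass^1$, so is $D$; for a strictly convex $\cclass^1$ body $C$ write $x_C(u)\in\bd C$ for the point with outer unit normal $u\in\usphere$ and $\rho_C(u)$ for its radius of curvature where defined. The reverse Gauss map of $D$ is then $u\mapsto z(u):=x_K(u)-x_K(-u)=x_L(u)-x_L(-u)$, and, since radii of curvature add under Minkowski addition, $\rho_K(u)+\rho_K(-u)=\rho_D(u)=\rho_L(u)+\rho_L(-u)$ wherever these exist; in particular $K$ and $L$ have equal width functions. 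Now fix $u_0\in\usphere$ with coordinates chosen so that $u_0=e_2$. For $z$ inside $D$ near $z(u_0)$ the set $K\cap(K+z)$ is a thin convex lens lying near $x_K(u_0)$ (equivalently near $x_K(-u_0)+z$), whose area is governed by the germ of $\bd K$ at $x_K(u_0)$ and the germ of $\bd K$ at $x_K(-u_0)$, and these two germs enter that area symmetrically. Thus the restriction of $g=g_K=g_L$ to a neighbourhood of $z(u_0)$ is an invariant of the \emph{unordered} pair of those two germs, and it must equal the analogous invariant computed from $L$. Equivalently, $g$ along the segment from $o$ to $z(u_0)$ records the distribution of the lengths of the chords of $K$ parallel to $z(u_0)$ near the longest such chord, whose endpoints are precisely $x_K(\pm u_0)$. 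A further datum is carried by $\nabla g$ near $\bd D$: it equals the $90^{\circ}$ rotation of the chord that $\bd K$ cuts off near $x_K(-u_0)$.

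\emph{The smooth model.}
When $\bd K$ and $\bd L$ are twice differentiable with positive curvature at the points involved, a parabolic approximation of the lens yields, as $t\to0^{+}$,
\[
g\bigl(z(u_0)-t\,u_0\bigr)=c\,\Bigl(\tfrac1{\rho_K(u_0)}+\tfrac1{\rho_K(-u_0)}\Bigr)^{-1/2}t^{3/2}+o(t^{3/2}),
\]
with a universal constant $c>0$. Combined with $\rho_K(u_0)+\rho_K(-u_0)=\rho_L(u_0)+\rho_L(-u_0)$ — equal sums and equal sums of reciprocals force equal products — this gives $\{\rho_K(u_0),\rho_K(-u_0)\}=\{\rho_L(u_0),\rho_L(-u_0)\}$ for a.e.\ $u_0$. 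Put $A=\{u:\rho_K(u)=\rho_L(u)\}$ and $B=\{u:\rho_K(u)=\rho_L(-u)\}$: both are symmetric under $u\mapsto-u$, and (being closed, once the curvatures are continuous) their union is $\usphere$. By the Baire category theorem one of them, say $A$, contains a non-degenerate arc $I$; there $\rho_K=\rho_L$, so $h_K-h_L$ solves $f+f''=0$ on $I$ and is linear, whence $\{x_K(u):u\in I\}$ and $\{x_L(u):u\in I\}$ differ by a translation — a non-degenerate arc of $\bd L$ whose translate lies in $\bd K$. If instead $B$ contains an arc, the same argument with $-L$ in place of $L$ produces an arc of $\bd L$ whose reflection lies in $\bd K$.

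\emph{The general case.}
For merely strictly convex $\cclass^1$ bodies the surface area measures $S_K,S_L$ may be singular, so $\rho_K,\rho_L$ vanish almost everywhere and the pointwise curvature comparison becomes vacuous; moreover, even when it is available, an almost-everywhere identity is far weaker than an identity on an arc. The plan is to stay at the level of the covariogram: from the full local behaviour of $g$ at $\bd D$ — the radial profile $t\mapsto g(z(u)-t\,u)$ for small $t$ (not merely its leading term), its analogues under non-radial approach, and the gradient datum above — one extracts, for \emph{every} $u\in\usphere$, a comparison of the unordered germ-pair of $\bd K$ at $x_K(\pm u)$ with that of $\bd L$ at $x_L(\pm u)$, valid also at the degenerate directions where $\bd K$ is flatter or sharper than any parabola. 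One then decomposes $\usphere$ into the (closed) loci where this comparison is ``aligned'' resp.\ ``reflected''; since they cover $\usphere$, one of them has non-empty interior, and on such an arc $\bd L$ and $\bd K$ (or $\bd(-K)$) coincide up to a translation, which is the desired conclusion.

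\emph{Main obstacle.}
Essentially all of the difficulty lies in the last step: producing, in the $\cclass^1$ setting and without curvature, a covariogram-derived quantity along $\bd D$ that depends continuously on $u$ and is rigid enough that the ``aligned'' and ``reflected'' loci are genuinely closed. This requires a fine analysis of the lens-area functional near $\bd D$, in particular at the degenerate boundary points, together with a careful organisation of the resulting cases; the difference-body reduction and the smooth model above are routine by comparison.
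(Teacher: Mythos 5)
There is a genuine gap: what you have written is a proof only in the smooth case, together with an acknowledged ``plan'' for the case the proposition is actually about. Your parabolic-lens computation, the identity $\rho_K(u)+\rho_K(-u)=\rho_L(u)+\rho_L(-u)$ coming from $DK=DL$, and the resulting equality of unordered curvature pairs constitute (essentially) the previously known solution for $\cclass^2$ bodies with positive continuous curvature (Bianchi--Segala--Vol\v{c}i\v{c}, cited in the paper as the motivation for Proposition~\ref{ArcDetermProp}); even there your Baire argument needs continuity of $\rho_K,\rho_L$ to make the loci $A,B$ closed, i.e.\ exactly the $\cclass^2_+$ hypothesis. For merely strictly convex $\cclass^1$ bodies, the three steps you would need are all missing: (i) that the local behaviour of $g$ near $\bd DK$ determines, for \emph{every} direction $u\in\usphere^1$, the unordered pair of boundary germs of $K$ at $x_K(\pm u)$ is asserted but not proved, and no mechanism replacing curvature asymptotics is offered -- this claim is essentially a local form of the theorem itself; (ii) the closedness (or any regularity) of the ``aligned'' and ``reflected'' loci, without which the Baire/covering step collapses, is precisely what you yourself label the ``main obstacle''; (iii) the passage from a pointwise germ comparison on an arc of normals to an actual translated or reflected boundary arc is not carried out in the non-smooth setting (your $f+f''=0$ argument again presupposes curvature data). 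So the conclusion is not reached for the bodies in the statement.

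It is also worth noting that the paper's proof does not proceed by boundary asymptotics at all. It works at interior points $x$ of $DK$ via the inscribed parallelograms $P(K,x)$ and the gradient identity $\nabla g_K(x)=\Rot D(x)$, the Hessian identities of Theorem~\ref{scnd:drv:cov:thm}, the eigenvector Lemma~\ref{u1u3eigv} for $\G(x)\G(y)^{-1}$ built from two parallelograms sharing a diagonal, and the hexagon criterion of Proposition~\ref{HexDetermProp}, which shows that the diagonal-sharing configuration is itself covariogram-detectable; this yields Proposition~\ref{u1u3:determ}, i.e.\ the determination of $\{u_1(x),-u_3(x)\}$ whenever $\det\G(x)\neq-1$. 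The rigidity and continuity that your plan hopes to extract from the behaviour of $g$ near $\bd DK$ is obtained there instead from this interior, finite-dimensional data, after which a connectedness argument (disjoint neighbourhoods $N_1,N_3$ and a curve of chords anchored at $p_3$) propagates a consistent choice and produces the common arc; the centrally symmetric case is disposed of separately via Theorem~\ref{central symmetry}. None of these ingredients has an analogue in your proposal, so the hard part of the statement remains unproved.
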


Theorem~\ref{MatConjConf} follows directly from Proposition~\ref{ArcDetermProp} and the following two statements. 

\begin{theorem} \emph{(Bianchi \cite{MR2108257})} \label{IrregThm} 
	Let $K$ and $L$ be planar convex bodies with equal covariograms. Assume that one of them is not strictly convex or not $\cclass^1$ regular. Then $K$ and $L$ are translations or reflections of each other. 
\end{theorem}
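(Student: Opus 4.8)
The plan is to extract from the common covariogram its entire singular structure together with the boundary asymptotics, and to prove that these data determine a planar convex body up to translation and reflection as soon as a corner or a boundary segment is present. First I would record that $\supp g_K = K+(-K)$, which I abbreviate $DK$, so that $g_K=g_L$ already forces $DK=DL$; here $DK$ is centrally symmetric and its support function is the width function of $K$. A maximal segment of $\partial K$ of direction $w$ produces a genuine edge of $\partial(DK)$ of direction $w$, so segments are visible directly on $\partial(DK)$. By contrast a corner of $K$ generically leaves $\partial(DK)$ smooth, since the Minkowski sum with the smooth antipodal arc of $-K$ regularizes it; corners must therefore be located from finer, interior data.

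The core step is a dictionary between the local behaviour of $g_K$ and the special boundary features, read off from the combinatorics of $K\cap(K+x)$. I would use two mechanisms. For a point $x_0\in\partial(DK)$ equal to the difference $p-q$ of the two contact points $p,q\in\partial K$, the overlap $g_K(x)$ vanishes as $x\to x_0$ at a rate governed by the local geometry at $p$ and $q$: order $\dist(x,\partial(DK))^{3/2}$ for two regular points of positive curvature, and a characteristically different order when a corner or a segment is involved. In the interior, the set $\Sigma$ where $g_K$ fails to be of class $\cclass^2$ records every change of combinatorial type of $K\cap(K+x)$: a corner $v$ of $K$ interacting with a boundary arc $\gamma\subset\partial K$ forces such a change exactly along the translated arc $\gamma-v$ (translated edge-lines in the polygonal case), while the symmetry $g_K(x)=g_K(-x)$ simultaneously places the reflected arc $v-\gamma$ in $\Sigma$. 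The upshot is that $\Sigma$ contains translated and reflected copies of the distinguished arcs of $\partial K$, and that the type and position relative to $DK$ of every corner and segment of $K$ is determined by $g_K$.

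Because $g_K=g_L$, the data $DK$, $\Sigma$ and all boundary asymptotics coincide for $K$ and $L$; in particular $L$ is forced to carry corners and segments matching those of $K$, so the non-regularity is shared. I would then reconstruct $\partial K$ from this data: the distinguished points (corners and segment endpoints) are pinned down relative to $DK$ by $\Sigma$, and the smooth strictly convex arcs joining consecutive distinguished points are recovered by the analysis available for regular boundary pieces \cite{MR1938112}. Running this around the whole boundary produces a convex body determined up to translation and reflection; since both $K$ and $L$ realize it, they are translations or reflections of each other.

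The main obstacle is the global assembly and the exclusion of exceptional configurations: locally each feature is detectable, but one must glue the arcs consistently all the way around $\partial K$, disentangle the curves of $\Sigma$ that cross or overlap (several corner--arc and edge--edge interactions may populate the same region), and rule out the parallelism-driven coincidences --- the parallelogram-type ambiguities that furnish the genuine counterexamples to the general problem --- in which distinct gluings might a priori yield the same covariogram. Treating uniformly the mixed case, where $\partial K$ carries curved arcs together with corners or segments, and handling the reflection ambiguity cleanly, is where the real work lies.
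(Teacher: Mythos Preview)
This theorem is not proved in the present paper: it is quoted from \cite{MR2108257} and used as a black box, together with Proposition~\ref{FromArcsToBodies}, to reduce Theorem~\ref{MatConjConf} to the strictly convex $\cclass^1$ regular case handled by Proposition~\ref{ArcDetermProp}. There is therefore no proof here to compare your proposal against.

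Your outline does track the broad strategy of \cite{MR2108257}: segments of $\partial K$ are visible as edges of $DK=\supp g_K$; corners are located via the interior singular set of $g_K$ (the locus where the combinatorial type of $K\cap(K+x)$ changes); and the smooth arcs between the distinguished points are then reconstructed. You are also right that the genuine difficulty is the global assembly and the exclusion of parallelism-driven coincidences.

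Two places where the sketch, as written, would not close. First, appealing to \cite{MR1938112} for the smooth strictly convex arcs is not legitimate: that result assumes continuous strictly positive curvature on all of $\partial K$, which is unavailable here; in \cite{MR2108257} the determination of the smooth portions proceeds differently and uses the presence of the irregular feature in an essential way. Second, the boundary asymptotics you invoke (the $3/2$-power law) already presuppose positive curvature at both contact points; for general $\cclass^1$ points, or for a corner meeting a flat or low-curvature arc, the catalogue of possible asymptotic types is richer and must be analyzed case by case. Your last paragraph honestly flags these as ``where the real work lies,'' but that work is precisely the content of \cite{MR2108257} and is not supplied by the proposal.
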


\begin{proposition} \emph{(Bianchi \cite{MR2108257})} \label{FromArcsToBodies}
	Let $K$ and $L$ be planar convex bodies with equal covariograms and a common non-degenerate boundary arc. Then $K$ and $L$ coincide, up to translations and reflections.
\end{proposition}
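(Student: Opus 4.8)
By Theorem~\ref{IrregThm} I may assume that both $K$ and $L$ are strictly convex and $\cclass^1$ regular, since otherwise the conclusion is already part of that theorem. After translating and, if necessary, reflecting $L$ (operations that preserve $g_L$, so that $g_K=g_L$ and the common arc persist), I normalize so that a non-degenerate arc $A\subseteq\bd K\cap\bd L$ with matching outer normals is literally shared. Parametrize the boundaries by outer unit normal, $x_K,x_L\colon\usphere\to\bd K,\bd L$; these are homeomorphisms, and the set of common normals $\Gamma_0=\{u\in\usphere:x_K(u)=x_L(u)\}$ contains the arc of normals of $A$. The essential analytic tool is the first-order identity for the covariogram: differentiating $g_K=\mathbf{1}_K\ast\mathbf{1}_{(-K)}$ and using that the distributional gradient of $\mathbf{1}_K$ is minus the outer normal times arclength measure on $\bd K$, one gets that for $x\in\intr(K+(-K))\setminus\{o\}$ the set $\bd K\cap\bd(K+x)$ consists of exactly two points $p_1^K(x),p_2^K(x)$ and, with $J$ denoting rotation by $\pi/2$,
\[
\nabla g_K(x)=J\bigl(p_2^K(x)-p_1^K(x)\bigr),
\]
the labeling being fixed by the orientation of the arc $\{z\in\bd K:z+x\in K\}$ (whose endpoints are the two intersection points). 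Since $g_K=g_L$, I have $\nabla g_K\equiv\nabla g_L$.

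\textbf{Antipodal agreement and a forcing lemma.} Equal covariograms have equal supports, so the difference bodies coincide: $K+(-K)=L+(-L)$. Comparing the boundary points of this common body with a fixed normal $u$ gives $x_K(u)-x_K(-u)=x_L(u)-x_L(-u)$; combined with $x_K=x_L$ on $\Gamma_0$ this yields $x_K(-u)=x_L(-u)$ there, i.e.\ the antipodal arc $-\Gamma_0$ is common as well. The key \emph{forcing lemma} is then: if $x$ has an intersection point $p\in\bd K\cap\bd(K+x)$ such that the outer normals of $p$ and of $p-x$ both lie in the interior of $\Gamma_0\cup(-\Gamma_0)$, then $\bd K$ and $\bd L$ coincide on neighbourhoods of $p$ and of $p-x$; hence the pairs $(\bd K,\bd(K+x))$ and $(\bd L,\bd(L+x))$ share that intersection point with the same orientation label, and $\nabla g_K(x)=\nabla g_L(x)$ forces the \emph{other} intersection point to agree, $p_2^K(x)=p_2^L(x)$. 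This is exactly where the common arc hypothesis drives everything.

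\textbf{Propagation around the boundary.} Let $\Gamma\subseteq\usphere$ be the maximal arc of common normals containing the arc of $A$. If $\Gamma=\usphere$ then $\bd K=\bd L$ and we are done, so suppose $\Gamma$ is proper with endpoint $u_1$, and set $P_1=x_K(u_1)$. I probe with translations $x$ lying just inside the boundary of $K+(-K)$ in the direction $u_1$, so that $K+x$ meets $K$ in a thin lens near $P_1$. Both intersection points then localize near $P_1$ on opposite sides: one of them, $p$, has normal just \emph{inside} $\Gamma$ and preimage $p-x$ of normal just inside $-\Gamma$, so the forcing lemma applies; the other, $q$, has normal just \emph{past} $u_1$. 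The lemma gives $q\in\bd L$. Letting $x$ vary, the normal of $q$ sweeps a one-sided interval $(u_1,u_1+\eta)$, so an arc of $\bd K$ immediately beyond $u_1$ is contained in $\bd L$; joined to $\Gamma$ at $P_1$ this produces a strictly larger common arc, contradicting maximality. Therefore $\Gamma=\usphere$, $\bd K=\bd L$, and $K$ and $L$ agree up to the translation and reflection applied at the start.

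\textbf{The main obstacle.} The delicate part is the geometric control underlying the propagation step. I must show that, as the probe $x$ approaches the difference-body boundary in direction $u_1$, the two intersection points genuinely separate to opposite sides of $P_1$, that the normals of $p$ and $p-x$ stay strictly interior to $\Gamma$ and $-\Gamma$ respectively (here strict convexity, $\cclass^1$ regularity, and the positive length of $\Gamma$ are used), that the two intersections remain transversal so the two-point form of the gradient identity is valid, and that the normal of $q$ really covers a full interval past $u_1$ rather than stalling at $u_1$. A secondary issue is orientation bookkeeping: one must check that the intersection-point labels match between $K$ and $L$, so the forced relation is $p_2^K=p_2^L$ and not a reflected one; any residual reflected branch is absorbed by the reflection permitted in the conclusion and by the normalization fixed at the outset.
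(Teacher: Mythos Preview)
The paper does not contain a proof of Proposition~\ref{FromArcsToBodies}. This statement is quoted from Bianchi~\cite{MR2108257} and is used as a black box: together with Theorem~\ref{IrregThm} it reduces Theorem~\ref{MatConjConf} to Proposition~\ref{ArcDetermProp}, and the latter is the only new ingredient actually proved here. So there is no ``paper's own proof'' to compare your attempt against.

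That said, your outline is in the right spirit and uses exactly the tool the paper develops for its own purposes, namely the first-derivative identity \eqref{D:repr}, which in the paper's notation reads $\nabla g_K(x)=\RMx\bigl(p_1(K,x)-p_4(K,x)\bigr)$. Your ``forcing lemma'' is the natural consequence: if one intersection point of $\bd K$ with $\bd(K+x)$ together with its preimage under the shift lies on the shared arc, then equality of gradients pins down the other intersection point. The observation that $DK=DL$ transfers the common arc to the antipodal arc is also correct and is implicit in the paper (it is how $\supp g_K$ determines centrally symmetric bodies). The reduction via Theorem~\ref{IrregThm} to the strictly convex $\cclass^1$ case is legitimate.

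Where your sketch is genuinely incomplete is the propagation step, and you correctly flag it. When you take $x$ close to $\bd DK$ in the direction $u_1$, both intersection points $p_1(x)$ and $p_4(x)$ collapse towards $P_1$, with outer normals on opposite sides of $u_1$; simultaneously $p_2(x),p_3(x)$ collapse towards the antipodal point with normals on opposite sides of $-u_1$. You need to argue that one can choose a one-parameter family of such $x$ so that the ``controlled'' pair $\{p_4(x),p_3(x)\}$ keeps its normals strictly inside $\Gamma\cup(-\Gamma)$ while the normal of the ``forced'' point $p_1(x)$ sweeps a genuine interval past $u_1$; this is not automatic, because as $p_1$ moves away from $u_1$ so does $p_4$, and for general bodies the two displacements need not be comparable. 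You also need the labeling argument (that the shared intersection point carries the same index for $K$ and for $L$) to be airtight, since a label swap would yield $p_1^K(x)=p_4^L(x)$ rather than $p_1^K(x)=p_1^L(x)$ and the conclusion would collapse. These are exactly the points that require work in~\cite{MR2108257}; your write-up identifies them but does not resolve them, so as it stands it is a plausible strategy rather than a proof.
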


The ``heart'' of the proof is contained in Section~\ref{sect:rel:pos:par}, which also contains, in the beginning, an explanation of the main ideas.
Many natural questions for the covariogram problem are still open. We mention
here some of them.
\begin{enumerate}
 \item Which four-dimensional convex polytopes are determined by their
covariogram?
 \item All known examples of convex bodies that are not determined by their
covariogram are Cartesian products. Do there exist other examples?
 \item Is the answer to the covariogram problem positive for all three-dimensional convex bodies whose boundary has continuous  and strictly positive principal curvatures?
\end{enumerate}

\section{Preliminaries}

The closure, boundary, interior, linear hull, affine hull, and convex hull of a set, and the support of a function, are abbreviated, in the standard way, by $\cl,$ $\bd,$ $\intr,$  $\lin,$ $\aff,$ $\conv,$ and $\supp,$ respectively.  We denote by $o,$ $\sprod{\dotvar}{\dotvar},$ $|\dotvar|,$ and $\usphere^{d-1},$ origin, scalar product,  Euclidean norm, and Euclidean unit sphere in $\E^d,$ respectively. In analytic expressions elements of $\E^d$ are identified with real column vectors of length $d.$ Thus, $\sprod{x}{y} = x^\transp y,$ where $(\dotvar)^\transp$ denotes matrix transposition. Throughout the paper we use the matrix $\RMx:=\begin{InlineMx} 0 & -1 \\ 1 & \phantom{-}0\end{InlineMx}$ of 90-degree rotation in the counterclockwise orientation. We do not distinguish between $2 \times 2$ matrices over $\real$ and linear operators in $\E^2.$ For vectors $x, y \in \E^2$ we put 
\begin{equation}
	\det(x,y) :=  \det \MxColTwo{x}{y}  = -x^\transp \RMx y, \label{det:def:eq}
\end{equation}
where {\scriptsize $\MxColTwo{x}{y}$} stands for the matrix whose first column is $x$ and the second one is $y.$

Regarding standard notations and notions from the theory of convex sets we mostly follow the monograph \cite{MR94d:52007}. The \notion{difference body} of a convex body $K$ is the set $DK:=K+(-K)=\setcond{x -y}{x, y \in K}.$  It is not hard to see that $\supp g_K = DK.$ A boundary point $p$ of a convex body $K$ is said to be $\cclass^1$ \notion{regular} if there exists precisely one hyperplane supporting $K$ at $p.$ Furthermore, a convex body $K$ is said to be $\cclass^1$ \notion{regular} if all boundary points of $K$ are $\cclass^1$ regular. We say that a convex polygon $P \subseteq \E^2$ is \notion{inscribed} in a convex body $K \subseteq \E^2$ if  all vertices of $P$ lie in $\bd K.$  Given $q_1$, $q_2\in \bd K$, the chord $[q_1,q_2]$ is said to be an \notion{affine diameter} of $K$, if for some $u \in \E^2\setminus\{o\}$ the vectors $u$ and $-u$ are outward normals of $K$ at $q_1$ and $q_2,$ respectively. It is well known that $[q_1,q_2]$ is an affine diameter of $K$  \iff $q_1 - q_2 \in \bd DK.$  If $K$ is a planar convex body and $p, \, q$ are two distinct boundary points of $K,$ then $[p,q]_{K}$ stands for the counterclockwise boundary arc of $K$ starting at $p$ and terminating at $q.$

\section{Gradient of covariogram and inscribed parallelograms} \label{frst:drv:cov}

Let $K$ be a strictly convex and  $\cclass^1$ regular convex body in $\E^2.$ Consider an arbitrary $x \in \IDKO.$ Then there exist points $p_i(K,x), \ i \in \{1,\ldots,4\},$ in counterclockwise order on $K,$ such that $x=p_1(K,x)-p_2(K,x)=p_4(K,x)-p_3(K,x);$ see Fig.~\ref{P:def:fig:1}, also regarding notations introduced below. Then the set 
\begin{equation}
	P(K,x):=\conv \{p_1(K,x),\ldots,p_4(K,x)\} \label{P:def:eq}
\end{equation}
is a parallelogram inscribed in $K,$  whose edges are translates of $[o,x]$ and $[o,D(K,x)]$ with 
$$
	D(K,x):=p_1(K,x)-p_4(K,x),
$$
By $u_i(K,x)$ we denote the outward unit normal of $K$ at $p_i(K,x).$ 

    \begin{FigTab}{cc}{1mm}
        \begin{picture}(54,50)
        \put(-1,-0.7){\IncludeGraph{width=50\unitlength}{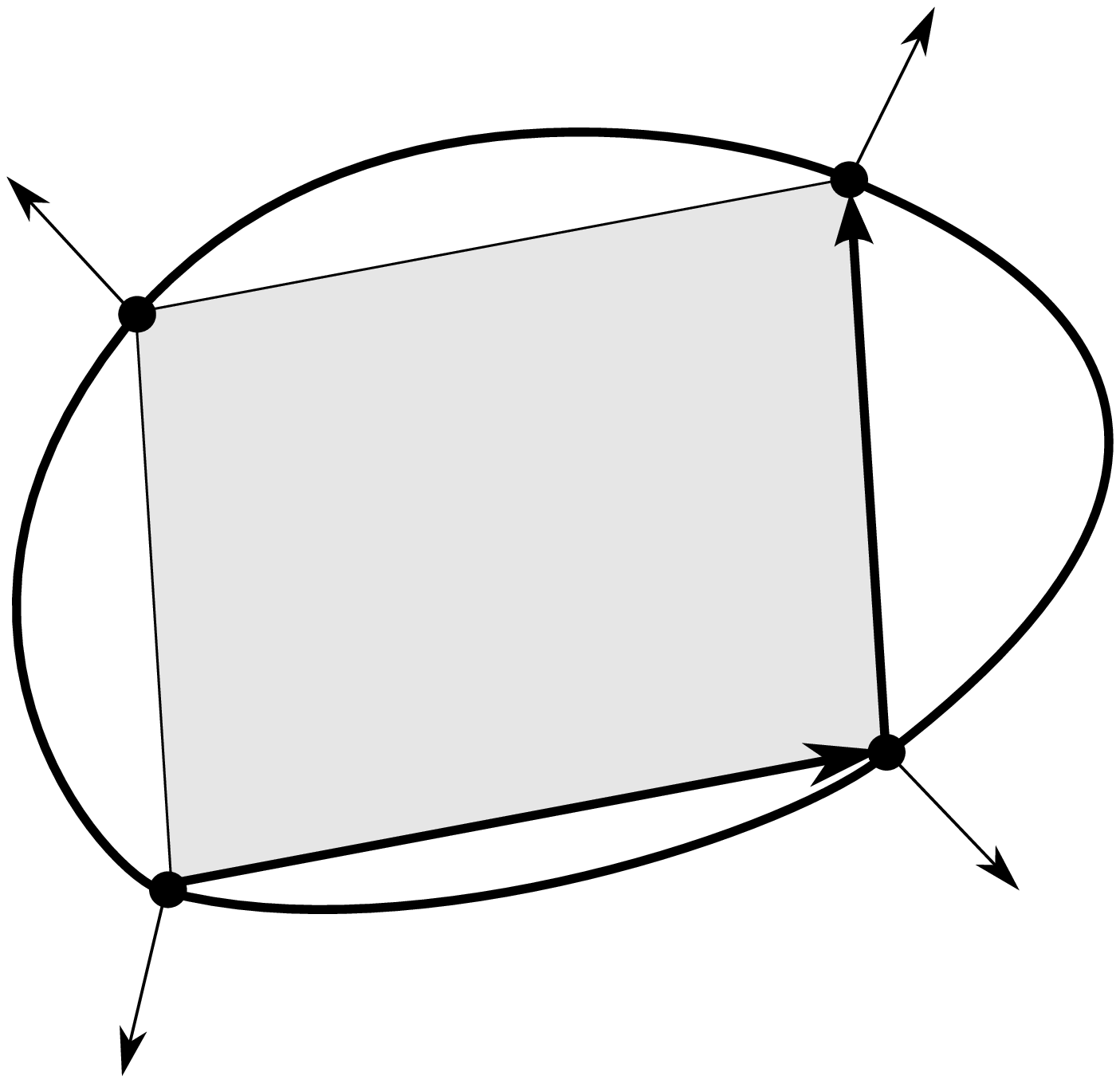}}
        \put(51,29){\small $K$}
        \put(36,14){\small $p_4$}
        \put(34,35){\small $p_1$}
        \put(9,9){\small $p_3$}
        \put(7,30){\small $p_2$}
        \put(46,5){\small $u_4$}
        \put(43,45){\small $u_1$}
        \put(-1,40){\small $u_2$}
        \put(7,-3){\small $u_3$}
	\put(22,10.5){\small $x$}
	\put(31.7,24){\small $D(x)$}

        \end{picture}
    \\
    \parbox[t]{0.90\textwidth}{\mycaption{\label{P:def:fig:1}}}
    \end{FigTab}

For the sake of brevity, dealing with functionals $f(K,x)$ depending on $K$ and $x,$  we shall also use  the notations $f(x)$ or $f$ instead of $f(K,x),$ provided the choice of $K$ and/or $x$ is clear from the context.

\begin{theorem} \label{frst:drv:cov:thm} 
	Let $K$ be a strictly convex and $\cclass^1$ regular body in $\E^2$ and let $x \in \IDKO.$ Then the following statements hold. 
	\begin{enumerate}[I.]
		\item  \label{DReprStat} The covariogram $g_K$ is continuously differentiable at $x.$ Moreover, 
		\begin{equation}
			\nabla g_K(x) = \RMx (D(x)). \label{D:repr}
		\end{equation}
		\item \label{PDContCond} The functions  $P, \ D, \ p_i, \ u_i$ with $ i \in \{1,\ldots,4\}$ are continuous at $x.$ 
		\item \label{Ptrans:prop} For every strictly convex $\cclass^1$ regular body $L$ with $g_L=g_K,$  the parallelogram $P(L,x)$ is a translate of $P(K,x).$ 
	\end{enumerate}
\end{theorem}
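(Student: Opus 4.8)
The plan is to prove the three parts in the order \ref{PDContCond}, \ref{DReprStat}, \ref{Ptrans:prop}: the continuity asserted in \ref{PDContCond} is exactly what upgrades the gradient formula of \ref{DReprStat} from pointwise differentiability to continuous differentiability, and \ref{DReprStat} alone yields \ref{Ptrans:prop}. For \ref{PDContCond} I would argue by compactness together with uniqueness. Strict convexity makes the length of the chords of $K$ in a fixed direction a strictly concave function of the chord which vanishes at the ends of its range, so for every $y\in\IDKO$ the value $|y|$ is attained by exactly two chords of $K$ parallel to $y$; these two chords are precisely the pair of sides of $P(K,y)$ parallel to $y$, so $p_1(K,\cdot),\dots,p_4(K,\cdot)$ are single-valued, and $P(K,y)$ is a non-degenerate parallelogram (its edge vectors $y$ and $D(K,y)$ cannot be parallel, as no three boundary points of a strictly convex body are collinear). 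Now let $x_n\to x$ in $\IDKO$. Since the $p_i(K,x_n)$ lie in the compact set $\bd K$, along a subsequence $p_i(K,x_n)\to\bar p_i\in\bd K$; passing to the limit in $x_n=p_1(K,x_n)-p_2(K,x_n)=p_4(K,x_n)-p_3(K,x_n)$ and in the counterclockwise order of the $p_i(K,x_n)$ shows that $\bar p_1,\dots,\bar p_4$ are four points in weakly counterclockwise order on $\bd K$ satisfying the defining relations of $P(K,x)$; as $P(K,x)$ is non-degenerate, uniqueness forces $\bar p_i=p_i(K,x)$. Every subsequence thus has a further subsequence with this same limit, so the full sequences converge, which gives continuity of $p_i$, and hence of $P$ and $D$, at $x$; continuity of $u_i$ follows because on a $\cclass^1$ regular body the outward unit normal is a continuous function of the boundary point.

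For \ref{DReprStat} I would use the first-variation (transport) argument for the area of a moving region. Fix $x\in\IDKO$. By strict convexity, $\bd K$ and $\bd(K+x)$ meet in exactly the two points $p_1(K,x)$ and $p_4(K,x)$, so $\bd(K\cap(K+x))$ is the union of an arc $\Gamma_1\subseteq\bd K$ and the arc $\Gamma_2:=x+[p_2(K,x),p_3(K,x)]_K\subseteq\bd(K+x)$, the latter running counterclockwise along $\bd(K+x)$ from $p_1(K,x)$ to $p_4(K,x)$. For a small increment $h$, comparing $K\cap(K+x+h)$ with $K\cap(K+x)$: along the interior of $\Gamma_1$ nothing changes, since $K$ is fixed; along the interior of $\Gamma_2$ the region gains or loses a thin strip of signed width $\sprod{h}{\nu}$, where $\nu$ is the outward unit normal of $K+x$; and the remaining discrepancy is confined to two small ``caps'' near $p_1(K,x)$ and $p_4(K,x)$, each of area $O(|h|^2)$. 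Hence $g_K$ is differentiable at $x$ with $\nabla g_K(x)=\int_{\Gamma_2}\nu\,\dd\hdmeas^1$. Parametrising $\Gamma_2$ by arc length in that counterclockwise sense and using $\nu=\RMx^{\transp}\dot\gamma$ (the unit tangent rotated clockwise by a right angle), one gets $\int_{\Gamma_2}\nu\,\dd\hdmeas^1=\RMx^{\transp}\bigl(p_4(K,x)-p_1(K,x)\bigr)=-\RMx^{\transp}D(x)=\RMx(D(x))$, the last equality using $\RMx^{\transp}=-\RMx$. Combined with \ref{PDContCond}, which makes $x\mapsto\RMx(D(x))$ continuous, this shows $g_K$ is continuously differentiable at $x$ and proves \eqref{D:repr}.

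Statement \ref{Ptrans:prop} is then immediate. If $L$ is strictly convex and $\cclass^1$ regular with $g_L=g_K$, then $DL=\supp g_L=\supp g_K=DK$, so $\IDKO=\intr\supp g_L\setminus\{o\}$ and \ref{DReprStat} applies to $L$ as well; from $\nabla g_L=\nabla g_K$ and \eqref{D:repr} we obtain $\RMx(D(L,x))=\RMx(D(K,x))$, hence $D(L,x)=D(K,x)$. Since $P(K,x)$ and $P(L,x)$ are parallelograms whose (unordered) pairs of edge vectors are $\{\pm x,\pm D(K,x)\}$ and $\{\pm x,\pm D(L,x)\}$, and these coincide, $P(L,x)$ is a translate of $P(K,x)$.

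The step I expect to be the main obstacle is making the first-variation computation in \ref{DReprStat} fully rigorous, i.e. showing that the total error in the expansion above is genuinely $o(|h|)$ — so that $g_K$ is differentiable at $x$ and not merely that its one-sided directional derivatives exist. This splits into controlling the two caps near $p_1(K,x)$ and $p_4(K,x)$, for which one needs the continuity (in fact the local Lipschitz behaviour, guaranteed by strict convexity) of these points, and controlling the deviation of $\bd(K+x)$ from its tangent line along $\Gamma_2$, handled by the uniform continuity of the outward normal on a $\cclass^1$ regular body. A convenient alternative is to write $V_2(K\cap(K+x))$ via Green's theorem as $\tfrac12\oint_{\Gamma_1\cup\Gamma_2}(y_1\,\dd y_2-y_2\,\dd y_1)$ and differentiate this line integral, which replaces the geometric error estimates by a routine differentiation of line integrals over explicit $x$-dependent curves, modulo the same regularity of $x\mapsto p_i(K,x)$.
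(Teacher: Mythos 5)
Your proposal is correct in substance but follows a genuinely different route from the paper on Parts~\ref{DReprStat} and \ref{PDContCond}. For Part~\ref{DReprStat} the paper gives no proof at all: it simply cites Matheron and Meyer--Reisner--Schmuckenschl\"ager for the formula $\nabla g_K(x)=\RMx(D(x))$, whereas you supply a self-contained first-variation argument (strip along the arc of $\bd(K+x)$ plus $o(|h|)$ caps at the two transversal intersection points $p_1,p_4$); your sign bookkeeping $\int_{\Gamma_2}\nu\,\dd\hdmeas^1=\RMx^{\transp}(p_4-p_1)=\RMx(D(x))$ is consistent with \eqref{D:repr}. For Part~\ref{PDContCond} the paper argues via transversality of $\bd K$ and $\bd K+x$ at $p_1,p_4$ and the Implicit Function Theorem, while you use compactness of $\bd K$ together with uniqueness of the inscribed parallelogram, the latter deduced from strict concavity of the chord-length function in a fixed direction. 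Part~\ref{Ptrans:prop} is handled identically in both (it is immediate from \eqref{D:repr}). What each approach buys: yours is more elementary and self-contained (no appeal to the literature or to the IFT), at the price of having to carry out the $o(|h|)$ error estimates you yourself flag as the delicate step; the paper's is shorter and pushes the analytic work into the cited references.

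One detail in your Part~\ref{PDContCond} argument needs a word more. After extracting convergent subsequences $p_i(K,x_n)\to\bar p_i$, uniqueness of the (non-degenerate) parallelogram $P(K,x)$ does not by itself exclude the degenerate limit $\bar p_1=\bar p_4$, $\bar p_2=\bar p_3$, in which the two limit chords coincide: that configuration also ``satisfies the defining relations'' in the weak sense and exhibits only one chord of length $|x|$ parallel to $x$, which is not in conflict with there being exactly two. To close this, note for instance that the two chords bounding $P(K,x_n)$ strictly sandwich the maximal chord of $K$ in direction $x_n$, whose length tends to $\rho_{DK}(x/|x|)>|x|$; if the two limit chords coincided, the line through them would have to contain both a chord of length $|x|$ and one of strictly greater length, which is impossible since a line meets $K$ in a single chord. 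With that observation your compactness argument is complete.
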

\begin{proof}
	Part~\ref{DReprStat} is known; see \MatRepr[p.~3] and \VolIntersect[p.~282].  Let us prove Part~\ref{PDContCond}.   If $x \in \IDKO,$ then $\bd K$ and $\bd K +x$ intersect precisely at $p_1(x)$ and $p_4(x).$ Furthermore, the intersection is transversal. By the Implicit Function Theorem, this implies that $p_1(x)$ and $p_4(x)$ depend continuously on $x.$ Since $K$ is $\cclass^1$ regular, the outward unit normal $u(p)$ of $K$ at a boundary point $p$ of $K$ depends continuously on $p.$ Therefore, for $i \in \{1,\ldots,4\}$ the function $u_i(x)=u(p_i(x))$ depends continuously on $x.$  Part~\ref{Ptrans:prop} follows directly from \eqref{D:repr}. 
\end{proof}

In what follows, notations involving an integer subscript $i$ ranging in a certain interval are extended periodically to all $i \in \integer.$ For example, we set $p_i(K,x):=p_j(K,x),$ where $i \in \integer,$ $j \in \{1,\ldots,4\}$ and $i=j\,(\!\!\!\!\mod 4).$

Throughout the paper, the parallelograms $P(K,x)$ will provide a convenient geometric representation of some information contained in the covariogram, in view of \eqref{D:repr}. A priori, for two planar convex bodies $K$ and $L$ with $g_K=g_L$, the translation
that carries $P(K,x)$ to $P(L,x)$ may depend on $x.$ One crucial step of the proof is to show that the above translation is in fact independent of $x.$ See  the beginning of Section~\ref{sect:rel:pos:par} for a brief sketch of the mentioned argument.

\section{Second derivatives, Monge-Amp\`{e}re equation, and  central \\ symmetry} \label{scnd:drv:sect}

If the covariogram of $K$ is twice differentiable at $x,$ we introduce the Hessian matrix 
$$
	\G(K,x) := \left[\frac{\partial^2 g_K(x)}{\partial x_i \partial x_j}\right]_{i,j=1}^2.
$$
The relations given in Theorem~\ref{scnd:drv:cov:thm} are reformulations of the relations presented in \cite[pp.~12-18]{Matheron8601}. Part~\ref{GRel} of Theorem~\ref{scnd:drv:cov:thm} is extended to every dimension in \cite{MR1260892}. We omit the proof of Part~\ref{GRel} and present a short proof of Parts~\ref{DeltaRel} and \ref{uEq}.

\newcommand{\pdp}[1]{\frac{\partial^+}{\partial #1}}
\newcommand{\pdm}[1]{\frac{\partial^-}{\partial #1}}
\newcommand{\pdpm}[1]{\frac{\partial^\pm}{\partial #1}}
\begin{theorem}	\label{scnd:drv:cov:thm} Let $K$ be a strictly convex and $\cclass^1$ regular body in $\E^2.$ Then $g_K(x)$ is continuously differentiable at every $x \in \IDKO.$ Furthermore, for every $x \in \IDKO,$ the following statements hold true.
\begin{enumerate}[I.]
	\item  \label{GRel} The Hessian $\G(x)$ can be represented by 
\begin{equation}
	\G= \frac{u_2 u_1^\transp}{\detu{2}{1}} - \frac{u_3 u_4^\transp}{\detu{3}{4}} = \frac{u_1 u_2^\transp}{\detu{2}{1}}- \frac{u_4u_3^\transp}{\detu{3}{4}}. \label{G:mx:repr} 
\end{equation}
	\item \label{DeltaRel} The determinant of $\G(x)$ depends continuously on $x$ and satisfies 
	\begin{eqnarray}
		\hessg&=& - \frac{\detu{2}{3} \detu{4}{1}}{\detu{3}{4} \detu{1}{2}} < 0, \label{Delta:det:eq:ineq} \\
		1+\hessg&=& \phantom{-}\frac{\detu{2}{4}\detu{1}{3}}{\detu{3}{4}\detu{1}{2}}. \label{OnePlusDelta} 
	\end{eqnarray}
	\item \label{uEq} The vectors $u_1$, $u_3$ and the matrix $\G$ are related by 
	\begin{equation}
		u_1^\transp \G^{-1} u_3 = 0. \label{u1:u3:sym:eq}\\ 
	\end{equation}
\end{enumerate}
\end{theorem}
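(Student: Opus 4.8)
The plan is to obtain the three identities by differentiating the first-order formula $\nabla g_K(x) = \RMx(D(x))$ from Theorem~\ref{frst:drv:cov:thm}, where $D(x) = p_1(x)-p_4(x)$. Since the covariogram is $\cclass^1$ and the maps $p_i$ are continuous (Theorem~\ref{frst:drv:cov:thm}.\ref{PDContCond}), the key is to compute the differentials $\dd p_i(x)$, which exist because $p_1(x)$ and $p_4(x)$ are transversal intersection points of $\bd K$ with $\bd K + x$ and the Implicit Function Theorem applies. I would parametrize $\bd K$ by the outward normal direction and record that the tangent line of $\bd K$ at $p_i$ has direction $\RMx u_i$, so that $\dd p_i$ lies along $\RMx u_i$; writing $\dd p_i = (\RMx u_i)\,\dd t_i$ for scalar one-forms $\dd t_i$, the two defining relations $p_1 - p_2 = x$ and $p_4 - p_3 = x$ give $(\RMx u_1)\dd t_1 - (\RMx u_2)\dd t_2 = \dd x$ and $(\RMx u_4)\dd t_4 - (\RMx u_3)\dd t_3 = \dd x$. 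Solving these two planar linear systems by Cramer's rule expresses each $\dd t_i$ as a linear functional of $\dd x$ with coefficients built from $\det(u_j,u_k)$, and then $\dd D = (\RMx u_1)\dd t_1 - (\RMx u_4)\dd t_4$, whence $\G = \RMx \,\dd D / \dd x = \RMx \cdot (\text{that expression}) \cdot$, using $\RMx(\RMx u_i) = -u_i$ and $(\RMx u)^\transp = u^\transp \RMx^\transp = -u^\transp \RMx$. Carefully bookkeeping the signs produces the rank-two formula \eqref{G:mx:repr}; the two displayed forms of $\G$ are equal because $\G$ is symmetric (being a Hessian), so each summand may be symmetrized, or equivalently because $u_2 u_1^\transp/\detu21 - u_1 u_2^\transp/\detu21$ is itself symmetric-plus-antisymmetric and the antisymmetric parts from the two terms cancel.

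For Part~\ref{DeltaRel}, I would just compute the determinant of the $2\times 2$ matrix on the right of \eqref{G:mx:repr}. The determinant of a sum of two rank-one matrices $ab^\transp + cd^\transp$ equals $\det(a,c)\det(b,d)$ (or the analogous $2\times 2$ identity), so $\det\G$ collapses to the product $-\detu{2}{3}\,\detu{4}{1}\,/(\detu{3}{4}\,\detu{1}{2})$ after sorting out which of the two forms in \eqref{G:mx:repr} to use for the $a,b$ versus $c,d$ slots; continuity in $x$ is then inherited from continuity of the $u_i$. The sign: since $p_1,p_2,p_3,p_4$ occur in counterclockwise order on $\bd K$ (Fig.~\ref{P:def:fig:1}), the outward normals $u_1,u_2,u_3,u_4$ are also in counterclockwise cyclic order, which fixes the signs of the four $2\times 2$ determinants involved and yields $\det\G<0$. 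Identity \eqref{OnePlusDelta} is the algebraic statement $1 + \det\G = \detu24\,\detu13/(\detu34\,\detu12)$, which I would verify by clearing the common denominator $\detu34\,\detu12$ and checking the numerator identity $\detu34\,\detu12 - \detu23\,\detu41 = \detu24\,\detu13$; this is a Plücker-type (Grassmann--Plücker) relation among the six determinants formed from the four vectors $u_1,\dots,u_4$ in the plane, valid for any four vectors, and can be confirmed by a direct expansion in coordinates.

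For Part~\ref{uEq}, the cleanest route is geometric: $\G^{-1}$ applied to an outward normal direction should recover a tangent direction of an appropriate curve, so $u_1^\transp \G^{-1} u_3 = 0$ says two such tangent directions are, in the relevant bilinear sense, orthogonal — concretely, from \eqref{G:mx:repr} one can compute $\G^{-1}$ explicitly (it is again rank-full, and its inverse, using $\det\G$ from \eqref{Delta:det:eq:ineq}, has a clean rank-two shape in the $u_i$) and then the product $u_1^\transp\G^{-1}u_3$ telescopes to $0$ because of a determinant cancellation $\detu11 = 0$ appearing in the numerator. Alternatively, I would argue that $\G^{-1}u_1$ is parallel to $\RMx u_1$ (the tangent direction at $p_1$): indeed from $\G = \RMx\, \dd D/\dd x$ one gets $\G^{-1} = (\dd x/\dd D)\RMx^{-1}$, and applying this to $u_1$ and pairing against $u_3$, the chain of substitutions forces a $\det(u_1,u_1)$ factor. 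I would also note that by the symmetry $K\leftrightarrow$ relabelling one simultaneously gets $u_2^\transp\G^{-1}u_4 = 0$, reflecting the two diagonals of the inscribed parallelogram.

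The main obstacle I anticipate is purely bookkeeping: getting every sign right in the differentiation step (Part~\ref{GRel}), in particular tracking the orientation of the normals $u_i$ around $\bd K$, the sign conventions in $\det(x,y) = -x^\transp\RMx y$, and the direction in which each $\dd t_i$ increases. Once \eqref{G:mx:repr} is established with the correct signs, Parts~\ref{DeltaRel} and \ref{uEq} are short algebraic consequences — essentially the rank-one determinant identity and one Grassmann--Plücker relation — so the risk is concentrated in the first computation rather than in any conceptual difficulty.
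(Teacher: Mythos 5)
Your plan is sound and, apart from the two points below, it establishes everything the theorem asserts, but it distributes the work differently from the paper. The paper does not prove Part~\ref{GRel} at all: \eqref{G:mx:repr} is quoted from Matheron and from Meyer--Reisner--Schmuckenschl\"ager, and only Parts~\ref{DeltaRel} and \ref{uEq} are proved, taking \eqref{G:mx:repr} as given. Your differentiation of $\nabla g_K(x)=\RMx D(x)$, with $\dd p_i$ tangent to $\bd K$ at $p_i$ and Cramer's rule for the tangential coefficients, is essentially the derivation in those references; with your conventions it lands on $\G=\frac{u_1u_2^\transp}{\detu{2}{1}}-\frac{u_4u_3^\transp}{\detu{3}{4}}$, and the other expression in \eqref{G:mx:repr} is its transpose, so symmetry of the Hessian (or, even more simply, the identity $ab^\transp-ba^\transp=-\det(a,b)\RMx$ applied to each pair) finishes Part~\ref{GRel}. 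For Part~\ref{DeltaRel} the paper computes $\G\Rot u_2$ and $\G\Rot u_3$, assembles them into one matrix identity and takes determinants, whereas you read $\det\G$ off from writing $ab^\transp+cd^\transp$ as the product of the matrix with columns $a,c$ and the transpose of the matrix with columns $b,d$; the two computations are equivalent, yours being marginally shorter, and both then use the same Grassmann--Pl\"ucker relation \eqref{gr:plu:2d} for \eqref{OnePlusDelta} and the same continuity argument. Your first (``explicit inverse'') argument for Part~\ref{uEq} is exactly the paper's proof: $\Rot\G\Rot=-\det\G\cdot\G^{-1}$, and in $u_1^\transp\Rot\G\Rot u_3$ the factors $u_1^\transp\Rot u_1=u_3^\transp\Rot u_3=0$ annihilate both rank-one terms.

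Two steps need repair. First, the sign claim in Part~\ref{DeltaRel}: saying that $u_1,\dots,u_4$ are in counterclockwise cyclic order does not by itself ``fix the signs,'' since cyclic order allows one consecutive angular gap to exceed $\pi$, which would flip a sign; what is needed is $\detu{i}{i+1}>0$ as in \eqref{07.10.24,13:31}, and this is required not only for the inequality in \eqref{Delta:det:eq:ineq} but also to know that the denominators in \eqref{G:mx:repr} and $\det\G$ itself are nonzero, hence that $\G^{-1}$ in Part~\ref{uEq} exists. The paper closes this by noting that, $P(K,x)$ being inscribed in $K$, the normal $u_i$ lies in the normal cone of $P(K,x)$ at $p_i$, so that $w,u_1,u_2,-w$ are in counterclockwise order with $w$ the outward normal of the edge $[p_4,p_1]$, strict convexity excluding the extreme positions $u_1=w$ and $u_2=-w$; some such argument must be supplied. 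Second, your ``alternative'' geometric route to Part~\ref{uEq} is wrong as stated: $\G^{-1}u_1$ is not parallel to $\RMx u_1$, since \eqref{G:mx:repr} gives $\G\RMx u_1=-u_1+\frac{\detu{3}{1}}{\detu{3}{4}}\,u_4$, which is parallel to $u_1$ only when $u_1=-u_3$. The correct geometric fact, implicit in the paper's computation, is $\G\RMx u_3=\frac{\detu{3}{2}}{\detu{2}{1}}\,u_1$, i.e.\ $\G^{-1}u_1$ is parallel to $\RMx u_3$, and that does yield \eqref{u1:u3:sym:eq} at once; since you offer this only as an alternative, your main argument for Part~\ref{uEq} stands.
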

\begin{proof}
	\emph{Part \ref{GRel}:} For the proof see \cite[pp.~12-18]{Matheron8601} and \cite[pp.~283-284]{MR1260892}.

	\emph{Part \ref{DeltaRel}:} From \eqref{G:mx:repr} we get 
	$$
	\begin{array}{rllll}
		\G \Rot u_2&= &- \frac{u_4u_3^\transp\Rot u_2 }{ \detu{3}{4} } &\stackrel{\eqref{det:def:eq}}{=} & \phantom{-}\frac{\detu{3}{2}}{\detu{3}{4}} u_4, \\
		\G \Rot u_3&= &\phantom{-} \frac{u_1 u_2^\transp \Rot u_3 }{\detu{2}{1}} &\stackrel{\eqref{det:def:eq}}{=} &-\frac{\detu{2}{3}}{\detu{2}{1}} u_1 = \frac{\detu{3}{2}}{\detu{2}{1}} u_1.
	\end{array}
	$$
	The above two equalities imply
	$$
		\G \Rot \MxColTwo{u_2}{u_3} = \detu{3}{2} \MxColTwo{u_4}{u_1} \begin{Mx} \frac{1}{\detu{3}{4}} & 0 \\ 0 & \frac{1}{\detu{2}{1}} \end{Mx}.
	$$
	Taking determinants of the left and the right hand side we obtain 
	\begin{equation} \label{07.11.19,17:17}
		\hessg \cdot \detu{2}{3} = \detu{2}{3}^2 \cdot \detu{4}{1} \cdot \frac{1}{\detu{3}{4} \detu{2}{1}},
	\end{equation}
	Let us notice that 
	\begin{equation} \label{07.10.24,13:31}
		\detu{i}{i+1} >0
	\end{equation}  for every $i \in \{1,\ldots,4\}.$  For instance, $\detu{1}{2}$ is positive because, if $w$ denotes the unit outward normal to the edge $[p_4,p_1]$ of $P(K,x),$ then $w, u_1, u_2,$ and $-w$ are in this counterclockwise order on $\usphere^1,$ by the strict convexity of $K.$ By \eqref{07.10.24,13:31}, we may divide \eqref{07.11.19,17:17} by $\detu{2}{3}$, arriving at the equality in \eqref{Delta:det:eq:ineq}. The inequality in \eqref{Delta:det:eq:ineq} follows from \eqref{07.10.24,13:31}.

	Equality \eqref{OnePlusDelta} follows directly from \eqref{Delta:det:eq:ineq} and 
the algebraic identity
\begin{equation}
	\det(v_1,v_3) \det(v_2,v_4) = \det(v_2,v_3) \det(v_1,v_4) + \det(v_4,v_3) \det(v_2,v_1) \label{gr:plu:2d}
\end{equation}
which holds for all $v_1,\ldots,v_4 \in \E^2$ and can be found, in a much more general form, in \OrMatrBk[p.~127].
 The continuity of $\det G(x)$ is a consequence of \eqref{Delta:det:eq:ineq} and Theorem~\ref{frst:drv:cov:thm}.

	\emph{Part~\ref{uEq}:} We multiply \eqref{G:mx:repr} by $u_1^\transp \Rot$ from the left and by $\Rot u_3$ from the right getting $u_1^\transp \Rot \G \Rot u_3 = 0.$ Expressing the entries of $\Rot \G \Rot $ over the entries of $G$ one can see that $\Rot \G \Rot = -\det \G \cdot G^{-1}.$ Hence, taking into account that $\det \G \ne 0,$ we arrive at \eqref{u1:u3:sym:eq}.
\end{proof}

\begin{theorem}\label{central symmetry}
Let $K$ be a strictly convex and $\cclass^1$ regular body in $\E^2$. The following conditions are equivalent.
\begin{enumerate}[(i)]
 \item\label{cs_one}
The body $K$ is centrally symmetric.
 \item\label{cs_two}
At least one diagonal of each parallelogram inscribed in $K$ is an affine diameter of $K$.
 \item\label{cs_three}
The covariogram $g_K$ is a solution of the \emph{Monge-Amp\`ere differential equation} $\det \G(x) =-1$ for $x \in \IDKO$.
\end{enumerate}
\end{theorem}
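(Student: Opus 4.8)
The plan is to prove $\text{(ii)}\Leftrightarrow\text{(iii)}$, then $\text{(i)}\Rightarrow\text{(ii)}$, and finally $\text{(ii)}\Rightarrow\text{(i)}$, which together give the cyclic equivalence. Throughout I use the normal-vector description of inscribed parallelograms together with two preliminary observations. First, every parallelogram inscribed in $K$ equals $P(K,x)$ for a unique $x\in\IDKO$: if $q_1,\dots,q_4$ are its vertices in counterclockwise order and $x:=q_1-q_2=q_4-q_3$, then $x\neq o$, and $x\notin\bd DK$ — otherwise $[q_1,q_2]$ and $[q_4,q_3]$ would be two affine diameters with the same difference vector $x\in\bd DK$, which forces $q_1=q_4$ and $q_2=q_3$ by injectivity of the (bijective, since $K$ is strictly convex and $\cclass^1$-regular) Gauss map — so $x\in\IDKO$ and the parallelogram is $P(K,x)$ by uniqueness in the definition of $p_i(K,x)$. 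Second, for distinct $p,q\in\bd K$ the chord $[p,q]$ is an affine diameter iff the outer unit normals of $K$ at $p$ and $q$ are antipodal, and by injectivity of the Gauss map this is the same as the determinant of the two normals vanishing. Hence for $x\in\IDKO$ a diagonal $[p_i(x),p_{i+2}(x)]$ of $P(K,x)$ is an affine diameter iff $\det(u_i(x),u_{i+2}(x))=0$, and condition (ii) is equivalent to: $\det(u_1(x),u_3(x))\cdot\det(u_2(x),u_4(x))=0$ for every $x\in\IDKO$.

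For $\text{(ii)}\Leftrightarrow\text{(iii)}$: by \eqref{OnePlusDelta}, $1+\det G(x)=\dfrac{\det(u_2,u_4)\,\det(u_1,u_3)}{\det(u_3,u_4)\,\det(u_1,u_2)}$, and the denominator is strictly positive by \eqref{07.10.24,13:31}. Therefore $\det G(x)=-1$ holds iff $\det(u_1,u_3)\det(u_2,u_4)=0$, and letting $x$ range over $\IDKO$ this is precisely the reformulation of (ii) just obtained.

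For $\text{(i)}\Rightarrow\text{(ii)}$: after a translation assume $K=-K$, and let $\gamma\colon\usphere^1\to\bd K$ be the inverse Gauss map. Fix $x\in\IDKO$. The two sides of $P(K,x)$ that are translates of $[o,x]$ are chords $C_1,C_2$ of $K$ of equal length, lying on two distinct lines parallel to $x$, at "levels" $\lambda_1\neq\lambda_2$ measured along $\RMx x$, with $o$ at level $0$. For a strictly convex body the function $\lambda\mapsto\bigl(\text{length of }K\cap\{\text{line at level }\lambda\}\bigr)$ is strictly concave on its interval of definition — if it were affine on a subinterval, then over that subinterval the lower and upper boundary functions of $K$ would both be affine, so $\bd K$ would contain segments — hence it is strictly unimodal; and it is even, because $K=-K$. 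So $\lambda_2=-\lambda_1$, whence $C_2=-C_1$, and following the vertices around yields $p_3(x)=-p_1(x)$ and $p_4(x)=-p_2(x)$; thus $P(K,x)$ is centered at $o$, and in particular both its diagonals pass through $o$. Finally, a chord of $K$ through $o$ is an affine diameter: such a chord joins $\gamma(v)$ to $-\gamma(v)=\gamma(-v)$ for some $v$, and the normal $-v$ at $\gamma(-v)$ is antipodal to the normal $v$ at $\gamma(v)$. Hence both diagonals of every inscribed parallelogram are affine diameters, so (ii) holds.

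For $\text{(ii)}\Rightarrow\text{(i)}$ — the crux — suppose $K$ is not centrally symmetric; I will exhibit an inscribed parallelogram violating (ii). Let $\mathcal M:=\{\tfrac12(\gamma(v)+\gamma(-v)):v\in\usphere^1\}$ be the set of midpoints of affine diameters. By the classical characterization, $K$ is centrally symmetric iff $\mathcal M$ is a single point; moreover $\mathcal M$ has empty interior, being a continuous image of $\usphere^1$ built from the inverse Gauss map, whose image $\bd K$ is rectifiable and hence of zero area. A point $m$ can lie on an affine diameter bisected by $m$ only if $m\in\mathcal M$. On the other hand I claim that, since $K$ is not centrally symmetric, the set $W$ of points of $\intr K$ lying on \emph{at least two} chords of $K$ that they bisect has nonempty interior. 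Granting this, pick $m\in W\setminus\mathcal M$ and two distinct chords $C=[p_1,p_3]$ and $C'=[p_2,p_4]$ of $K$ bisected by $m$; then $p_1+p_3=2m=p_2+p_4$, so $\conv\{p_1,p_2,p_3,p_4\}$ is a parallelogram inscribed in $K$ with diagonals $C$ and $C'$, and neither is an affine diameter because its midpoint $m$ is not in $\mathcal M$ — contradicting (ii). Hence $K$ is centrally symmetric. The claim about $W$ is the genuinely delicate point, and I expect it to be the main obstacle: the natural route is a parity/degree argument for the "chord–midpoint map" $\{p,q\}\mapsto\tfrac12(p+q)$ from the (compactified, Möbius-band) space of unordered pairs of boundary points to $K$ — which restricts to a homeomorphism onto $\bd K$, forcing the number of chords bisected by a generic $m$ to be odd, hence, because the compact Möbius band is not homeomorphic to a disk, to exceed $1$ on a set with nonempty interior as soon as $K$ is not symmetric. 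Making this comparison rigorous, and arranging that the relevant $m$ stays off $\mathcal M$, is where the real work lies; everything else reduces to the normal-vector dictionary, formula \eqref{OnePlusDelta}, and the strict unimodality of chord lengths.
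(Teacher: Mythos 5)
Your preliminary reductions are sound: every inscribed parallelogram is of the form $P(K,x)$ with $x\in\IDKO$, condition (ii) becomes $\det(u_1,u_3)\det(u_2,u_4)=0$ for all such $x$, the equivalence (ii)$\Leftrightarrow$(iii) then follows from \eqref{OnePlusDelta} exactly as in the paper, and your proof of (i)$\Rightarrow$(ii) via strict concavity of the chord-length function is correct (the paper treats this implication as immediate, but your argument is fine). The genuine gap is in (ii)$\Rightarrow$(i), which is where all the content of the theorem lies. You reduce it to the claim that, for a non-centrally-symmetric $K$, the set $W$ of interior points bisecting at least two chords has nonempty interior, and you do not prove that claim. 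The parity/degree sketch cannot close it: the mod-2 degree of the chord--midpoint map only tells you that a generic point bisects an odd number of chords, i.e.\ at least one, and the fact that the M\"obius band is not homeomorphic to a disk only rules out injectivity of that map, not that its multiple-point set has interior. A disk or an ellipse shows that the multiple-point set can be a single point, so no purely topological argument can produce an open set; the asymmetry of $K$ would have to enter through a specific geometric mechanism, and none is offered. So the implication is unproven, and it is not even clear (to me, and apparently to you) that the intermediate claim about $W$ is true; at best it is a nontrivial theorem in its own right.

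For comparison, the paper's proof of (ii)$\Rightarrow$(i) is local and perturbative rather than global. First it upgrades (ii) to the statement that \emph{both} diagonals of every inscribed parallelogram are affine diameters: if for some $x$ only $[p_1(x),p_3(x)]$ were an affine diameter, slide $p_4$ along a small boundary arc $q(t)$ keeping $p_3$ fixed (i.e.\ take $x(t)=q(t)-p_3(x)$); the diagonal $[p_2(x(t)),p_4(x(t))]$ stays a non-affine-diameter by continuity, while if $[p_1(x(t)),p_3(x(t))]$ remained an affine diameter for all small $t$, uniqueness of the boundary point with a prescribed outer normal would force $[p_1(x(t)),p_3(x(t))]=[p_1(x),p_3(x)]$, so the reflection of the arc $q(t)$ about the midpoint of $[p_1,p_3]$ lies in $\bd K$, making $[p_2(x),p_4(x)]$ an affine diameter too --- either way a contradiction. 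Second, with both diagonals always affine diameters, letting $q(t)$ run over the whole arc $[p_1(x),p_3(x)]_K$ and repeating the same rigidity argument shows that the reflection of that arc about the midpoint of $[p_1,p_3]$ lies in $\bd K$, hence $K$ is centrally symmetric about that midpoint. An argument of this kind, using only continuity and the uniqueness of normals for strictly convex $\cclass^1$ bodies, is what you need in place of the degree-theoretic claim.
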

\begin{proof}
The implication \eqref{cs_one}$\Longrightarrow$\eqref{cs_two} is trivial. The equivalence of \eqref{cs_two} and \eqref{cs_three} follows from  \eqref{OnePlusDelta}.  It remains to prove that \eqref{cs_two} implies \eqref{cs_one}.

Let us first prove that \eqref{cs_two} implies that both diagonals of each parallelogram inscribed in $K$ are affine diameters. Assume the contrary. Then, for some $x\in \IDKO$,  exactly one diagonal of $P(x)$, say $[p_1(x),p_3(x)]$, is an affine diameter. 
Let $q(t)$, $t\in[0,1]$, be a continuous parametrization of a small arc of $\bd K$ with $q(0)=p_4(x)$. If we define $x(t):=q(t)-p_3(x)$ then $p_3(x(t))=p_3(x)$ and $p_4(x(t))=q(t)$. We claim that there exists a sufficiently small $t>0$ such that no diagonal of $P(x(t))$ is an affine diameter of $K$. In fact, $[p_2(x(t)),p_4(x(t))]$ is not an affine diameter, because it is close to $[p_2(x),p_4(x)],$ which is not an affine diameter. On the other hand, assume that there exists $\eps>0$ such that for each $t\in[0,\eps]$ the diagonal $[p_1(x(t)),p_3(x(t))]$ is an affine diameter. Since $p_1(x)$ is the only point of $\bd K$ with unit outer normal opposite to the one in $p_3(x)$ and $p_3(x)=p_3(x(t))$, we have $[p_1(x(t)),p_3(x(t))]=[p_1(x),p_3(x)]$. Thus,  the reflection  of $\setcond{q(t)}{t\in[0,\eps]}$ about the midpoint of $[p_1(x),p_3(x)]$ is contained in $\bd K$, and this implies that $[p_2(x),p_4(x)]$  is an affine diameter too, a contradiction. This proves the claim.

It remains to prove that if both diagonals of each parallelogram inscribed in $K$ are affine diameters, then $K$ is centrally symmetric. Let $x\in \IDKO$, let $q(t)$, $t\in[0,1]$, be a continuous parametrization of the arc $[p_1(x),p_3(x)]_K$ and let $x(t)$ be as above. Arguing as above one can prove that, for each $t$, we have $[p_1(x(t)),p_3(x(t))]=[p_1(x),p_3(x)]$. Therefore, for each $t$, $q(t)$ and its reflection about the midpoint $c$ of  $[p_1(x),p_3(x)]$ belong to $\bd K$. Thus  $K$ is centrally symmetric with respect to $c$. 
\end{proof}

\section{Determination of  an arc of the boundary} \label{sect:rel:pos:par}

The crucial point of the proof of Proposition~\ref{ArcDetermProp}  is the statement that outer normals of $K$
are determined by $g_K$, up to the ambiguities arising from reflections
of the body. More precisely, we need to prove the following.

\begin{proposition} \label{u1u3:determ}
	Let $K$ be a strictly convex and $\cclass^1$ regular body in $\E^2.$ Then, for every $x \in \IDKO$ with $\det G(x) \ne -1,$ the set $\{u_1(x),-u_3(x)\}$ is uniquely determined by $g_K.$ \eop
\end{proposition}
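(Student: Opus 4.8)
The plan is to extract, from $g_K$ alone, enough rigid data about the four contact points and normals of the inscribed parallelogram $P(K,x)$ to pin down the unordered pair $\{u_1(x),-u_3(x)\}$. By Theorem~\ref{frst:drv:cov:thm}, Part~\ref{DReprStat}, the vector $D(x) = p_1(x)-p_4(x) = \RMx^{-1}\nabla g_K(x)$ is known from $g_K$; likewise $x$ itself is known, so the two edge directions of $P(K,x)$, namely $[o,x]$ and $[o,D(x)]$, are known. Thus the \emph{shape} of the parallelogram $P(K,x)$ (its two side vectors, hence its diagonals $p_1-p_3 = x + D(x)$ and $p_2-p_4 = D(x)-x$) is determined by $g_K$, though not its position. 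What remains is to recover the normals $u_1,\dots,u_4$ at the four vertices, and the proposition asks only for $\{u_1,-u_3\}$, i.e.\ the normals at the two endpoints of the diagonal $[p_1,p_3]$.

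The key analytic input is Theorem~\ref{scnd:drv:cov:thm}: from $g_K$ we know the Hessian $G(x)$, hence $\det G(x)$ and, via \eqref{Delta:det:eq:ineq} and \eqref{OnePlusDelta}, the two ``cross-ratio''-type quantities $\frac{\detu{2}{3}\detu{4}{1}}{\detu{3}{4}\detu{1}{2}}$ and $\frac{\detu{2}{4}\detu{1}{3}}{\detu{3}{4}\detu{1}{2}}$. Moreover equation \eqref{u1:u3:sym:eq}, $u_1^\transp G^{-1} u_3 = 0$, is a linear relation between the two unknown normals $u_1, u_3$ with known coefficient matrix $G(x)^{-1}$; together with $u_1^\transp\RMx u_2 \propto \detu{1}{2}$-type sign constraints and the fact that $\RMx u_i$ is parallel to the $i$-th edge of the parallelogram (whose directions are known), this should cut the degrees of freedom down sharply. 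Concretely: $\RMx u_1$ and $\RMx u_2$ are the outward-edge directions along $[p_4,p_1]$ and $[p_1,p_2]$ respectively — but these edge directions are exactly $\pm D(x)$ and $\pm x$, \emph{known from $g_K$}. So $u_1$ is constrained to lie on a known line (perpendicular to $D(x)$) and $u_3$ on another known line (perpendicular to $D(x)$ as well, since $[p_2,p_3]$ is parallel to $[p_4,p_1]$); the only freedom is a sign/orientation on each, i.e.\ finitely many candidates, and the unit-length and outward-pointing conditions plus \eqref{u1:u3:sym:eq} should eliminate all but the correct pair (up to the global reflection, which swaps the parallelogram with its point reflection and correspondingly swaps $\{u_1,-u_3\}$ with itself or with $\{-u_1,u_3\}$ — one must check this is the only residual ambiguity).

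The step I expect to be the main obstacle is showing that \eqref{u1:u3:sym:eq}, combined with the sign/orientation data, genuinely \emph{isolates} the pair $\{u_1,-u_3\}$ rather than leaving a one-parameter family or a spurious second solution. The delicate point is that $u_1$ and $u_3$ are \emph{unit} vectors perpendicular to the \emph{same} direction $D(x)$ (edges $[p_4,p_1]$ and $[p_2,p_3]$ of the parallelogram are parallel), so on the line $D(x)^{\perp}$ there are only two unit vectors, and the outward-normal condition for a strictly convex body forces $u_1$ and $u_3$ to be the two \emph{opposite} ones; but then $u_1 = -u_3$ and the diagonal $[p_1,p_3]$ would be an affine diameter, forcing $\det G(x) = -1$ by Theorem~\ref{central symmetry} via \eqref{OnePlusDelta} — contradicting the hypothesis. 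This means my naive reading of which edges are parallel to which normals must be wrong, and I would instead need to argue: $\RMx u_1$ is parallel to edge $[p_1,p_2]$ (direction $\pm x$) and $\RMx u_4$ is parallel to edge $[p_1,p_4]$ (direction $\pm D(x)$), since $u_i$ is the normal \emph{at the vertex}, not along an edge — the edge at $p_1$ going to $p_2$ is a chord, not the support line. So the real content is that $u_1$ is pinned down not by parallelism but by solving \eqref{u1:u3:sym:eq} together with the known ratios from \eqref{Delta:det:eq:ineq}--\eqref{OnePlusDelta}: one sets up $u_1 = (\cos\alpha,\sin\alpha)$, $u_3=(\cos\gamma,\sin\gamma)$ with unknown $\alpha,\gamma$, writes \eqref{u1:u3:sym:eq} as one equation and the relation $1+\det G = \frac{\detu{2}{4}\detu{1}{3}}{\detu{3}{4}\detu{1}{2}}$ (using that $u_2,u_4$ are in turn constrained by the known edge directions and by \eqref{07.10.24,13:31}) as a second, and shows the resulting system has exactly the two solutions $\{u_1,-u_3\}$ and its reflection when $\det G(x)\neq -1$. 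Verifying that this $2\times 2$ transcendental system is non-degenerate away from the Monge-Amp\`ere locus — i.e.\ that $\det G(x)=-1$ is precisely the locus where the solution fails to be unique — is where the technical weight of the argument lies, and I would handle it by a careful case analysis of the positions of $u_1,u_2,u_3,u_4$ on $\usphere^1$ in counterclockwise order, using the positivity \eqref{07.10.24,13:31} throughout.
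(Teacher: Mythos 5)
Your proposal tries to recover $\{u_1(x),-u_3(x)\}$ from the covariogram data \emph{at the single point} $x$, namely $x$, $D(x)=\RMx^{-1}\nabla g_K(x)$ and the Hessian $\G(x)$, and this is where it fails: that data is genuinely insufficient. By \eqref{G:mx:repr} (equivalently \eqref{expressionG}), $\G\RMx^{-1}$ is a difference of two oblique rank-one projections, each determined by two of the four unknown normal directions; the four directions amount to four parameters, while a symmetric Hessian carries only three, so a one-parameter family of quadruples $(u_1,u_2,u_3,u_4)$ is compatible with the same gradient and Hessian at $x$. Concretely, \eqref{u1:u3:sym:eq} says only that $u_3$ is parallel to $\G(x)\RMx u_1$, so \emph{every} admissible direction of $u_1$ comes with a matching $u_3$; one equation cannot isolate the pair. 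Your proposed second equation \eqref{OnePlusDelta} involves $u_2$ and $u_4$, and the claim that these ``are constrained by the known edge directions'' is precisely the error you flagged yourself a few lines earlier: the normals at the vertices of $P(K,x)$ are not parallel or perpendicular to the chords $x$ and $D(x)$; they are only constrained to interlace with the edge normals on $\usphere^1$, i.e.\ by \eqref{07.10.24,13:31}, which is an open condition and removes no degrees of freedom. Hence your ``$2\times 2$ transcendental system'' is underdetermined, and no case analysis of the cyclic order of $u_1,\dots,u_4$ can repair it. (A side remark: the ``residual reflection ambiguity'' you worry about does not exist — replacing $K$ by $-K$ permutes the vertices of $P(K,x)$ so that the set $\{u_1,-u_3\}$ is unchanged, which is why the proposition asserts genuine uniqueness of the set.)

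What is missing is the two-point mechanism that the actual proof relies on. One first constructs $y\neq x$ such that $P(K,y)$ shares the diagonal $[p_1,p_3]$ with $P(K,x)$; this is done by intersecting $\bd K$ with its reflection $K_c$ in the center $c$ of $P(K,x)$, and the hypothesis $\det \G(x)\neq -1$, via \eqref{OnePlusDelta}, decides which pair of arcs must cross transversally. Evaluating \eqref{u1:u3:sym:eq} at both $x$ and $y$ gives $u_1^\transp \G(x)^{-1}u_3=u_1^\transp \G(y)^{-1}u_3=0$, which identifies $u_1$ and $u_3$ as eigenvectors of $\G(x)\G(y)^{-1}$ with distinct eigendirections (Lemma~\ref{u1u3eigv}); this is the step that replaces your single-point system. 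Finally — and this is a subtlety your proposal does not touch — one must show that the shared-diagonal configuration is itself detectable from $g_K$: by Theorem~\ref{frst:drv:cov:thm} (Part~\ref{Ptrans:prop}) each $P(L,\cdot)$, for another body $L$ with $g_L=g_K$, is only a translate of $P(K,\cdot)$ by a translation that could a priori depend on the base point, so one needs Proposition~\ref{HexDetermProp} (conditions \eqref{DHexEq} and \eqref{1+Delta:Prod}) to conclude that a translate of the hexagon $\conv\bigl(P(K,x)\cup P(K,y)\bigr)$ is inscribed in $L$, after which Lemma~\ref{u1u3eigv} applies to $L$ as well. None of these three ingredients — the auxiliary point $y$, the eigenvector identification, and the covariogram characterization of inscribed hexagons — is present in your argument, so the proof has a fundamental gap rather than a fixable technical one.
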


Let us sketch the proof of Proposition~\ref{u1u3:determ}. First, we prove that there is $y$ such that $P(K,x)$ and $P(K,y)$ have the opposite vertices $p_1$ and $p_3$ in common and $[p_1,p_3]$ is not an affine diameter. This clearly implies that $u_1(K,x) =u_1(K,y), \ u_3(K,x)=u_3(K,y),$ and $u_1(K,x) \ne -u_3(K,x).$ Thus, $u_1$ and $u_3$ satisfy the system given by the two equations obtained by evaluating \eqref{u1:u3:sym:eq}  at both $x$ and $y.$ Using the geometric interpretation of the action of $G$ contained in Lemma~\ref{HessQuadInterp}, in Lemma~\ref{u1u3eigv} we express the vectors $u_1$ and $u_3$ in terms of the eigenvectors of $G(x) G(y)^{-1}.$ In order to make this expression of $u_1$ and $u_3$ dependent only on the covariogram, it remains to prove that the property that $P(K,x)$ and $P(K,y)$ share a diagonal is preserved across bodies with equal covariograms. The latter is done in Proposition~\ref{HexDetermProp}.

Let us now sketch how Proposition~\ref{ArcDetermProp} follows from Proposition~\ref{u1u3:determ}.
Let $K$ and $L$ be strictly convex $\cclass^1$ regular bodies with $g_K=g_L$, and
let us choose $x_0 \in \intr \supp g_K\setminus \{o \}$ such that $\det G(x_0)\neq
-1$. We will prove the following claim:

\textit{ If $x$ belongs to a suitable neighborhood $U$ of $x_0$, and if
$P(K,x)$
and $P(K,x_0)$ share their vertex $p_3$ (i.e. $p_3(K,x)=p_3(K,x_0)$),
then also $P(L,x)$ and $P(L,x_0)$ share their vertex $p_3$.}

Indeed, Proposition~\ref{u1u3:determ} together with some continuity argument 
allows us to prove that when $x$ is close to $x_0$ and
$u_3(K,x)=u_3(K,x_0)$ then we have $u_3(L,x)=u_3(L,x_0)$. In view of the
strict convexity of $K$ and $L$, this implies the claim above.

Let now $x(t)$, for $t\in[0,1]$, be a  parametrization of 
a curve contained in $U$ with the property that, for each $t \in [0,1]$ the parallelograms
$P(K,x_0)$ and $P(K,x(t))$ share their vertex $p_3$. The previous claim
implies that the arc of $\bd K$ spanned by the vertex $p_4(K,x(t))$ when
 $t$ varies in $[0,1]$, is a translate of the  arc of $\bd L$ spanned
by the vertex $p_4(L,x(t))$. Therefore, up to translations, $\bd K$ and
$\bd L$ have an arc in common.

\begin{lemma} \label{HessQuadInterp}
	Let $K$ be a strictly convex and $\cclass^1$ regular body in $\E^2$ and let $x \in \IDKO.$ Let $h \in \E^2 \setminus \{o\}$ be such that the vectors $u_1, u_2, -\frac{h}{|h|}, u_3, u_4, \frac{h}{|h|}$ and  $u_1$ are in counterclockwise order on $\usphere^1.$ 
	 Consider  the convex  quadrilateral $Q(x,h)$ with consecutive vertices $\range{q_1(x,h)}{q_4(x,h)}$ such that $q_1(x,h)=h$, $q_3(x,h)=o$ and, for each $i \in \{1,\ldots,4\},$  the vector $u_i(x)$ is an outward normal of the side $[q_i(x,h),q_{i+1}(x,h)];$  see Fig.~\ref{HessActionFig}. Then   $q_4(x,h)-q_2(x,h)=-\Rot \G(x) h.$
\end{lemma}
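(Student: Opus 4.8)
The plan is to reduce the claim to the first‐derivative formula \eqref{D:repr} of Theorem~\ref{frst:drv:cov:thm}, by recognizing $Q(x,h)$ as a ``$P$-parallelogram''-type object for a perturbed parameter. First I would set up the quadrilateral explicitly: since $u_1,u_2,-h/|h|,u_3,u_4,h/|h|$ lie in counterclockwise order on $\usphere^1$, the four half-lines with outward normals $u_1,\dots,u_4$ genuinely bound a convex quadrilateral, and prescribing $q_1=h$, $q_3=o$ together with the normal directions determines $q_2$ and $q_4$ uniquely as the intersection points of the appropriate support lines: $q_2$ is the meeting point of the line through $h$ with direction $\Rot u_1$ and the line through $o$ with direction $\Rot u_2$, and symmetrically $q_4$ is where the line through $h$ with direction $\Rot u_4$ meets the line through $o$ with direction $\Rot u_3$. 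So I would just solve these two pairs of linear equations for $q_2$ and $q_4$, obtaining
$$
 q_2 = \frac{\det(h,u_1)}{\det(u_2,u_1)}\,\Rot u_2, \qquad
 q_4 = \frac{\det(h,u_4)}{\det(u_3,u_4)}\,\Rot u_3
$$
(up to a careful sign check using \eqref{det:def:eq} and the orientation hypothesis, which guarantees the denominators are nonzero).

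Next I would compute $q_4-q_2$ from these two expressions and compare with $-\Rot\G(x)h$. Using the representation \eqref{G:mx:repr} of $\G$ in the form $\G = \dfrac{u_2 u_1^\transp}{\detu{2}{1}} - \dfrac{u_3 u_4^\transp}{\detu{3}{4}}$, one has $\G h = \dfrac{(u_1^\transp h)}{\detu{2}{1}}u_2 - \dfrac{(u_4^\transp h)}{\detu{3}{4}}u_3$; the issue is that the coefficients here involve $u_i^\transp h = \sprod{u_i}{h}$ rather than $\det(u_i,h)$. This is resolved by applying $\Rot$: since $\Rot u_i = $ (a unit tangent direction) and $h^\transp u_i = -h^\transp\Rot^2 u_i = \det(h,\Rot u_i)\cdot(\pm1)$, conjugating by $\Rot$ interchanges the scalar-product and determinant pairings in exactly the way needed. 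Concretely, I would verify the identity $\Rot\G h = \dfrac{\det(u_1,h)}{\detu{2}{1}}\Rot u_2 - \dfrac{\det(u_4,h)}{\detu{3}{4}}\Rot u_3$ by a direct $2\times2$ computation (expanding $\Rot(ab^\transp)h = (b^\transp h)\Rot a$ and using $b^\transp h = -b^\transp\Rot^\transp\Rot h = \det(b,\cdot)$-type relations, together with $\Rot^\transp = -\Rot$, $\Rot^2 = -\IdMx$). Matching this against $-(q_4-q_2) = \dfrac{\det(h,u_4)}{\detu{3}{4}}\Rot u_3 - \dfrac{\det(h,u_1)}{\detu{2}{1}}\Rot u_2$ and using antisymmetry of $\det$ then gives the claim.

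Alternatively — and this is the conceptually cleaner route I would actually pursue — I would observe that $Q(x,h)$, after a reflection, is homothetic to the ``difference-body parallelogram at a nearby point'': the vertices $q_i$ are, up to translation and scaling, the contact points $p_i$ appearing in $P(K,x)$, so $q_4-q_2$ is the analogue of $D(x)=p_1(x)-p_4(x)$ rotated, and the formula $q_4-q_2 = -\Rot\G(x)h$ is precisely the statement that the differential of the map $x\mapsto\Rot D(x) = \nabla g_K(x)$ in the direction corresponding to $h$ equals $\Rot\G(x)h$, read off from \eqref{D:repr}. Making this rigorous amounts to identifying $h$ with the ``first difference'' $p_1-p_2$ of the quadrilateral and $q_4-q_2$ with minus the image under $\Rot$ of the ``second difference'' $p_1-p_4$, which is exactly the content of the matrix computation above. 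The main obstacle is purely bookkeeping: getting every sign right in the orientation hypothesis on $u_1,u_2,-h/|h|,u_3,u_4,h/|h|$ and in the repeated use of \eqref{det:def:eq}, $\Rot^\transp=-\Rot$, and $\Rot^2=-\IdMx$; there is no analytic difficulty, since once $Q(x,h)$ is written in coordinates everything is a linear identity in $h$ that can be checked on a basis.
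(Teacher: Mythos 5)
Your strategy is essentially the paper's: the paper also derives the claim from \eqref{G:mx:repr} by pure linear algebra, writing $\G$ via the projection operators $\skprj{u_1}{u_2}$, $\skprj{u_4}{u_3}$ (obtained by exactly the Cramer's-rule solve you invoke), conjugating by $\Rot$, and decomposing $q_4-q_2=(q_1-q_2)+(q_4-q_1)=\skprj{\Rot u_1}{\Rot u_2}h-\skprj{\Rot u_4}{\Rot u_3}h$; your explicit computation of $q_2$ and $q_4$ is the same argument in coordinates. However, your intermediate formulas are wrong as written, and the error is not merely cosmetic. Solving your two linear systems, and using $\det(h,\Rot u_1)=\sprod{h}{u_1}$ (from \eqref{det:def:eq} and $\Rot^2=-\IdMx$), gives $q_2=\frac{\sprod{u_1}{h}}{\detu{2}{1}}\,\Rot u_2$ and $q_4=\frac{\sprod{u_4}{h}}{\detu{3}{4}}\,\Rot u_3$: the numerators are scalar products, not the determinants $\det(h,u_1)$, $\det(h,u_4)$ you wrote. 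Consequently the ``mismatch'' you then set out to repair does not exist: since $\Rot(ab^\transp)h=(b^\transp h)\Rot a$, one has directly $\Rot\G(x)h=\frac{\sprod{u_1}{h}}{\detu{2}{1}}\Rot u_2-\frac{\sprod{u_4}{h}}{\detu{3}{4}}\Rot u_3=q_2-q_4$, which is the lemma; whereas the identity you propose to verify, with $\det(u_1,h)$ and $\det(u_4,h)$ as coefficients, is false, and matching your (incorrect) expressions for $q_2,q_4$ against it yields $q_4-q_2=+\Rot\G(x)h$, the opposite sign to the claim. So the ``careful sign check'' you defer is exactly where the content lies, and it must be done with the scalar-product pairings, not the determinant ones. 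Your alternative route via the differential of $\nabla g_K=\Rot D$ is only heuristic as stated ($g_K$ is not assumed twice differentiable in a neighborhood in any controlled way beyond \eqref{G:mx:repr}, and the identification of $Q(x,h)$ with a perturbed $P(K,\cdot)$ is not made precise), so the matrix computation is the proof; once its coefficients are corrected, it is sound and coincides with the paper's.
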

	\begin{FigTab}{c}{1mm}
        \begin{picture}(50,42)
	\put(3,2){\IncludeGraph{width=40\unitlength}{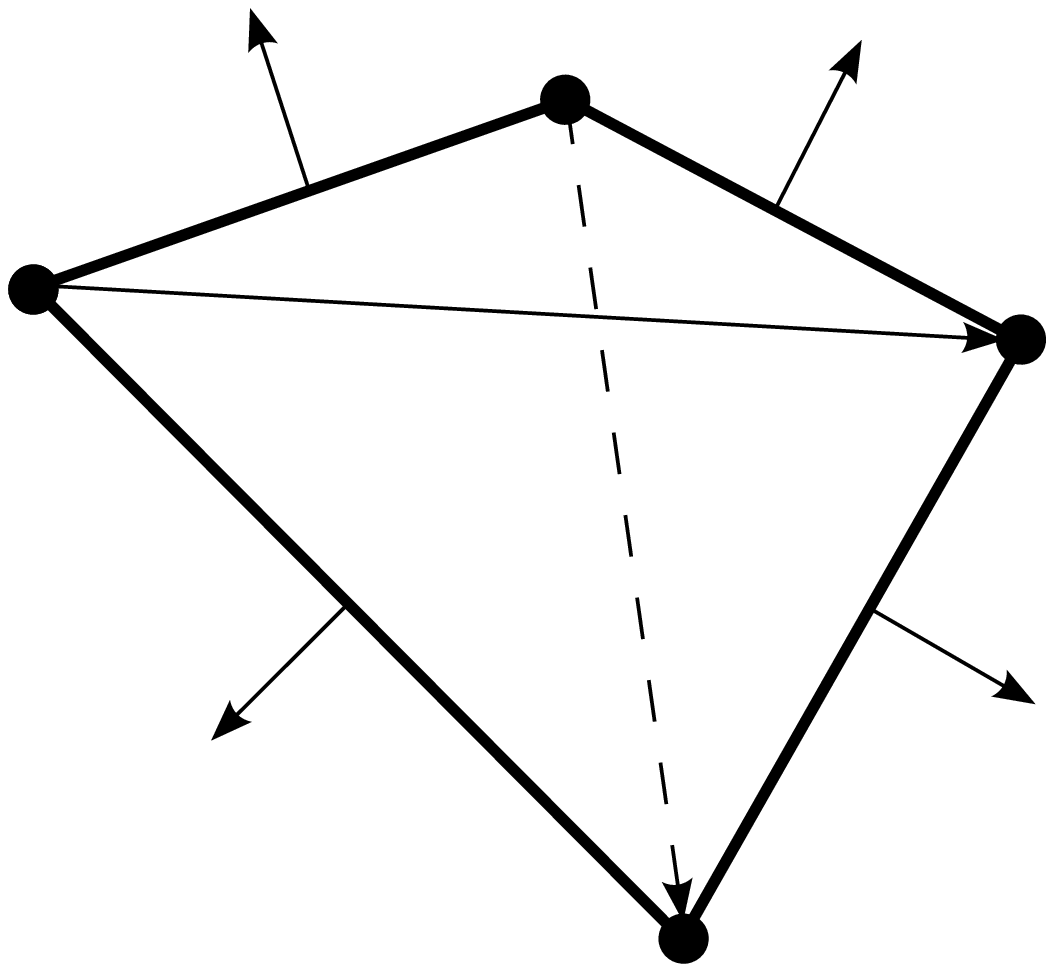}}
	\put(39,38){\small $u_1$}
	\put(9,38){\small $u_2$}
	\put(9,8){\small $u_3$}
	\put(45,10){\small $u_4$}
	\put(0,28){\small $q_3$}
	\put(33,24){\small $h$}
	\put(20,18){\scriptsize $-\Rot \G(x) h$}
	\put(46,26){\scriptsize $q_1$}
	\put(24,38){\scriptsize $q_2$}
	\put(32,1){\scriptsize $q_4$}

        \end{picture}
	\\
	\parbox[t]{0.90\textwidth}{\mycaption{\label{HessActionFig} The quadrilateral $Q(x,h)$.}}
	\end{FigTab}

\begin{proof}
Given two linearly independent vectors $v_1, v_2 \in \E^2,$ we denote by $\skprj{v_1}{v_2}$ the operator of projection onto $\lin v_1$ along the vector $v_2,$ that is,  $\skprj{v_1}{v_2} y := \alpha_1 v_1$ for $y = \alpha_1 v_1 + \alpha_2 v_2$ and $\alpha_1,\alpha_2 \in \real.$ By \notion{Cramer's rule}, $\alpha_1= \det(y,v_2)/\det(v_1,v_2),$ and hence
\begin{equation*}
	\skprj{v_1}{v_2} y = \frac{\det(v_2,y)}{\det(v_2,v_1)} v_1 \stackrel{\eqref{det:def:eq}}{=} - \frac{v_1 v_2^\transp \RMx y}{\det(v_2,v_1)},
\end{equation*}
which implies 
\begin{equation*}
	\skprj{v_1}{v_2}  = - \frac{v_1 v_2^\transp \RMx }{\det(v_2,v_1)}. \label{prj:repr}
\end{equation*}
Thus, \eqref{G:mx:repr} is equivalent to
\begin{equation}\label{expressionG}
 \G= \skprj{u_1}{u_2}\RMx-\skprj{u_4}{u_3}\RMx.
\end{equation}
It can be easily verified that 
$\skprj{\RMx v_1}{\RMx v_2} = - \RMx \skprj{v_1}{v_2} \RMx.$ Therefore, \eqref{expressionG} implies
	\begin{equation} \label{G:R:prj:repr}
		-\RMx\G= \skprj{\RMx u_1}{\RMx u_2}-\skprj{\RMx u_4}{\RMx u_3}. 
	\end{equation}
	We have 
	\begin{align*}
		q_4(x,h) - q_2(x,h) & = \bigl(q_1(x,h) - q_2(x,h)\bigr) + \bigl(q_4(x,h) - q_1(x,h)\bigr)  \\ & = \Pi_{\Rot u_1}^{\Rot u_2} \, h - \Pi_{\Rot u_4}^{\Rot u_3} \, h \stackrel{\eqref{G:R:prj:repr}}{=} -\Rot \, \G \, h.
	\end{align*}
\end{proof}

\begin{lemma} \label{u1u3eigv} Let $K$ be a strictly convex and $\cclass^1$ regular body in $\E^2 \! .$ Let $x, \ y$ be distinct vectors from $\IDKO$ such that
$p_i:=p_i(x)=p_i(y)$ for $i \in \{1,3\}$ and  the  segment $[p_1,p_3]$ is not an affine diameter of $K.$ Then the matrix $\G(x)\G(y)^{-1}$ has two
distinct real eigenvalues. Furthermore, if $v_1, \ v_3 \in \usphere^1$ are distinct eigenvectors of $G(x) G(y)^{-1}$  satisfying $\sprod{x}{v_1} \ge 0, \ \sprod{x}{v_3} \ge 0,$  then 
$\{u_1(x),-u_3(x) \} = \{v_1,v_3 \}.$ 
\end{lemma}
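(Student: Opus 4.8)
\textbf{Proof proposal for Lemma~\ref{u1u3eigv}.}

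The plan is to use Lemma~\ref{HessQuadInterp} to show that $u_1(x)$ and $u_3(x)$ are (up to sign) eigenvectors of $G(x)G(y)^{-1}$, and then to argue that these two eigenvectors are distinct and exhaust the eigenvectors. First I would set $p_1 = p_1(x) = p_1(y)$, $p_3 = p_3(x) = p_3(y)$, and observe that by Theorem~\ref{frst:drv:cov:thm}, Part~\ref{PDContCond}, the normals satisfy $u_1(x) = u_1(y) =: u_1$ and $u_3(x) = u_3(y) =: u_3$ (they are the normals at the common points $p_1, p_3$), while $u_1 \ne -u_3$ precisely because $[p_1,p_3]$ is not an affine diameter. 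The key geometric observation is that the parallelogram $P(K,x)$ can be viewed through the lens of Lemma~\ref{HessQuadInterp}: the quadrilateral $Q$ built from the four normals $u_1, u_2(x), u_3, u_4(x)$ with the prescribed diagonal $h$ degenerates, when $h$ is taken parallel to $p_1 - p_3$, into (a translate/scaling of) the parallelogram $P(K,x)$ itself, since then the side normals $u_2(x)$ and $u_4(x)$ are the actual normals at $p_2(x), p_4(x)$. I would make this precise: with $h_0 := p_1 - p_3$, the vector $h_0$ lies along the diagonal $[q_3, q_1]$ of $Q(x,h_0)$, and the other diagonal $q_4(x,h_0) - q_2(x,h_0) = p_4(x) - p_2(x)$ up to the translation identifying $Q$ with $P(K,x)$. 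Hence Lemma~\ref{HessQuadInterp} gives $p_4(x) - p_2(x) = -\RMx\, G(x)\, h_0$ (after checking the cyclic-order hypothesis on the normals, which holds by strict convexity), and similarly $p_4(y) - p_2(y) = -\RMx\, G(y)\, h_0$ using the \emph{same} $h_0$ since $p_1, p_3$ — hence the diagonal direction — are common to both.

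Next I would extract the eigenvector statement. From the two relations above, $h_0$ is mapped by $-\RMx G(x)$ and by $-\RMx G(y)$ to the two (generally different) diagonal vectors $d(x) := p_4(x)-p_2(x)$ and $d(y) := p_4(y)-p_2(y)$. The crucial point is to identify the directions of $d(x)$ and $d(y)$: since $p_2(x)$ is obtained from $p_1$ by subtracting $x$ and $p_4(x)$ from $p_3$ by adding $x$... more carefully, the edge $[p_1,p_2(x)]$ of $P(K,x)$ is a translate of $[o,x]$ and the edge $[p_4(x),p_1]$ a translate of $[o,D(x)]$; so the two diagonal directions of $P(K,x)$ are $p_1 - p_3$ (direction $h_0$) and $p_2(x) - p_4(x)$. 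One then checks that $G(x)^{-1}$ applied to $\RMx^{-1} d(x)$ returns $-h_0$, i.e. $G(y) G(x)^{-1}$ maps (a suitable multiple of) $d(x)$ to $d(x)$ — no, rather: composing, $G(x) G(y)^{-1}$ applied to $G(y) h_0 = -\RMx^{-1} d(y)$ gives $G(x) h_0 = -\RMx^{-1} d(x)$, so $G(x)G(y)^{-1}$ sends the direction of $d(y)$ to the direction of $d(x)$; this is not yet an eigenvector statement. The right move instead is to apply Lemma~\ref{HessQuadInterp} a \emph{second} time with the roles of the two diagonals swapped — i.e. take $h$ along the direction $p_2 - p_4$ — to obtain that $-\RMx G(x)$ maps the second diagonal direction to a multiple of $h_0$; chaining the two identities for $G(x)$ shows $(-\RMx G(x))^2 h_0$ is a multiple of $h_0$, and the same for $y$. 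But the cleaner route, and the one I would pursue, is: both $-\RMx G(x)$ and $-\RMx G(y)$ send $h_0 \mapsto \text{(second diagonal of } P(K,x), P(K,y))$ and send the second-diagonal direction $\mapsto$ multiple of $h_0$; since the second diagonals of $P(K,x)$ and $P(K,y)$ are the vectors $p_2(x)-p_4(x)$ and $p_2(y)-p_4(y)$ which both lie along the lines through $p_1$ parallel to... this needs the relation $u_1^\transp G^{-1} u_3 = 0$ from Theorem~\ref{scnd:drv:cov:thm}.\ref{uEq}.

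So the main line I would actually follow is via equation~\eqref{u1:u3:sym:eq}: evaluating $u_1^\transp G(x)^{-1} u_3 = 0$ and $u_1^\transp G(y)^{-1} u_3 = 0$ (legitimate since $u_1 = u_1(x) = u_1(y)$ and $u_3 = u_3(x) = u_3(y)$) says that both $G(x)^{-1} u_3$ and $G(y)^{-1} u_3$ lie in the line $u_1^\perp$, hence are parallel; write $G(x)^{-1} u_3 = \lambda G(y)^{-1} u_3$ for some scalar $\lambda$, which rearranges to $G(x) G(y)^{-1} (G(y) G(x)^{-1} u_3)$... more directly, $G(y)^{-1} u_3 = \lambda^{-1} G(x)^{-1} u_3$ gives $u_3 = \lambda^{-1} G(y) G(x)^{-1} u_3$, i.e. $G(y)G(x)^{-1}$ fixes the direction of $u_3$. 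By the symmetry of \eqref{u1:u3:sym:eq} in the two indices — using the second form of \eqref{G:mx:repr}, $G^\transp$ has the analogous structure, so one also gets $u_3^\transp G(x)^{-\transp} u_1 = 0$, equivalently $G(x)^{-1}$ maps $u_1$ (via the transpose relation) — I would instead symmetrically derive that $G(x)G(y)^{-1}$ fixes the direction of some vector naturally paired with $u_1$. The cleanest symmetric statement: since $G(x), G(y)$ are symmetric, \eqref{u1:u3:sym:eq} at $x$ and at $y$ gives $G(x)^{-1} u_1 \parallel u_3^\perp$-complement... Let me commit: from $u_1^\transp G(x)^{-1} u_3 = 0 = u_1^\transp G(y)^{-1}u_3$ we get $G(x)^{-1}u_3 \parallel G(y)^{-1}u_3$, so $w := G(y)^{-1} u_3$ satisfies $G(x)^{-1} u_3 = \mu w$ for some $\mu \ne 0$, i.e. $G(x)^{-1} G(y) w = \mu w$: thus $w = G(y)^{-1}u_3$ is an eigenvector of $G(x)^{-1}G(y)$, equivalently $G(x)G(y)^{-1}$ has eigenvector... taking inverses, $G(y)^{-1}G(x) w = \mu^{-1} w$. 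Hmm — the matrix in the statement is $G(x)G(y)^{-1}$, so I should phrase things in terms of $v := $ the corresponding eigenvector. Writing $M := G(x)G(y)^{-1}$, symmetry of $G(x), G(y)$ gives $M^\transp = G(y)^{-1}G(x)$, and we just showed $G(y)^{-1}G(x)$ has eigenvector $G(y)^{-1}u_3$; so $M^\transp$, hence $M$ up to a change of eigenvector, has a real eigenvalue with eigenvector expressible through $u_3$. Running the identical argument with $x, y$ interchanged, or using the $u_1$-side of the symmetry, produces the second eigenvector through $u_1$. Since $u_1 \ne \pm u_3$ (one of the two because $[p_1,p_3]$ is not an affine diameter, the other by strict convexity which forbids equal normals at distinct points), these two eigenvectors are linearly independent, so $M$ is diagonalizable with two distinct real eigenvalues — here I must also rule out the two eigenvalues coinciding, which would force $M$ to be scalar; but $M = G(x)G(y)^{-1}$ scalar means $G(x) = cG(y)$, and then $\det G(x) = c^2 \det G(y) < 0$ forces... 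I would derive a contradiction with $x \ne y$ via the strict monotonicity built into the geometry (the diagonals genuinely differ). Finally, normalizing the two eigenvectors to $\usphere^1$ and fixing their signs by $\langle x, v_i\rangle \ge 0$ pins them down to exactly $u_1(x)$ and $-u_3(x)$: indeed $\langle x, u_1(x)\rangle = \langle p_1 - p_2, u_1 \rangle \ge 0$ since $u_1$ is the outward normal at $p_1$ and $p_2 \in K$, and $\langle x, -u_3(x)\rangle = \langle p_1 - p_2, -u_3\rangle = \langle p_2 - p_1, u_3\rangle$, which I would check is $\ge 0$ from the counterclockwise order $u_1, u_2, u_3, u_4$ and the parallelogram structure (the vector $x = p_1 - p_2$ points from $p_2$ toward $p_1$, and $-u_3$, the inward-ish normal at $p_3$, has nonnegative inner product with it because $p_2$ and $p_1$ are the neighbors of $p_3$ along the parallelogram — concretely $p_2 - p_1 = p_3 - p_4 = -D(x)$ and $u_3$ is outward at $p_3$...). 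The main obstacle I anticipate is precisely this last bookkeeping of signs and counterclockwise orders to convert ``eigenvector up to sign'' into the exact unordered pair $\{u_1(x), -u_3(x)\}$, together with the clean verification that the two eigenvalues are distinct (ruling out $G(x) \parallel G(y)$) — the eigenvector existence itself falls out almost immediately from \eqref{u1:u3:sym:eq} once one notices that $u_1, u_3$ are common to $x$ and $y$.
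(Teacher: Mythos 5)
The first half of your argument is essentially the paper's: from the common vertices $p_1,p_3$ you get $u_1:=u_1(x)=u_1(y)$ and $u_3:=u_3(x)=u_3(y)$, and evaluating \eqref{u1:u3:sym:eq} at $x$ and at $y$ shows that $\G(x)^{-1}u_3$ and $\G(y)^{-1}u_3$ are both orthogonal to $u_1$, hence parallel, so that $u_3$ spans an eigendirection of $\G(x)\G(y)^{-1}$; by the symmetry of the Hessians the same argument works for $u_1$, and $u_1\ne\pm u_3$ (strict convexity, plus the hypothesis that $[p_1,p_3]$ is not an affine diameter) makes these two eigendirections distinct. Your detour through $M^{\transp}=\G(y)^{-1}\G(x)$ with eigenvector $\G(y)^{-1}u_3$ is unnecessary and, as phrased, yields only a real eigenvalue of $M$ rather than the eigenvectors you need; but applying $\G(x)$ to the parallelism relation gives $\G(x)\G(y)^{-1}u_3\parallel u_3$ directly, so that is easily repaired. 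The sign normalization at the end is also fine: $\sprod{x}{u_1}\ge 0$ because $x=p_1-p_2$ with $p_2\in K$, and $\sprod{x}{-u_3}\ge 0$ because $x=p_4-p_3$ with $p_4\in K$.

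The genuine gap is the claim that the two eigenvalues are distinct, i.e.\ that $\G(x)\G(y)^{-1}$ is not a scalar multiple of the identity. You acknowledge this must be ruled out but offer only ``strict monotonicity built into the geometry''; the determinant computation you start ($\det\G(x)=c^{2}\det\G(y)<0$) gives no contradiction, since it merely fixes $c^{2}$. This step cannot be skipped: if the matrix were scalar, every unit vector would be an eigenvector and the conclusion $\{u_1(x),-u_3(x)\}=\{v_1,v_3\}$ would fail. Nor can it be detected with the vectors you already have in hand: by \eqref{G:mx:repr}, both $\G(x)$ and $\G(y)$ map $\RMx u_1$ into $\lin u_3$ and $\RMx u_3$ into $\lin u_1$ (the coefficients differ, the directions do not), so testing on these directions is consistent with scalarity. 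The paper devotes the substantial geometric part of its proof precisely to this point: it forms the centrally symmetric hexagon $H=\conv\bigl(P(x)\cup P(y)\bigr)$, chooses $h$ an outward normal of the side $[p_1(x),p_4(x)]$ of $H$, and compares via Lemma~\ref{HessQuadInterp} the quadrilaterals $Q(x,h)$ and $Q(y,h)$: since $x\ne y$ forces $p_2(x)\ne p_2(y)$ and $p_4(x)\ne p_4(y)$, strict convexity gives $u_2(x)\ne u_2(y)$, $u_4(x)\ne u_4(y)$ in a definite angular order, whence $q_2(y,h)$ and $q_4(y,h)$ lie in the relative interiors of sides of $Q(x,h)$, the second diagonals of the two quadrilaterals are not parallel, and therefore $\G(x)h\not\parallel\G(y)h$, so $h$ is not an eigenvector. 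Your opening attempt to invoke Lemma~\ref{HessQuadInterp} with $h$ along the diagonal $p_1-p_3$ would not substitute for this even if completed: such an $h$ in general violates the cyclic-order hypothesis of that lemma, and testing on the common diagonal direction cannot distinguish $\G(x)$ from a multiple of $\G(y)$. As it stands, your proposal establishes that $\lin u_1$ and $\lin u_3$ are eigendirections, but not the full statement of the lemma.
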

\begin{proof}
The assumptions $p_i(x)=p_i(y)$ for $i \in \{1,3\}$ imply that $u_i:=u_i(x)=u_i(y)$ for $i \in \{1,3\}.$ By \eqref{u1:u3:sym:eq} applied at $x$ and $y,$ we get
	\begin{eqnarray}
		u_1^\transp \G(x)^{-1} u_3 & = & 0, \label{v1v4x:eq} \\
		u_1^\transp \G(y)^{-1} u_3 & = & 0. \label{v1v4y:eq}
	\end{eqnarray}
	From \eqref{v1v4x:eq} and \eqref{v1v4y:eq} we see that $u_1$ is orthogonal to both $\G(x)^{-1} u_3$ and $\G(y)^{-1} u_3.$ Then $\G(x)^{-1} u_3$ and $\G(y)^{-1} u_3$ are parallel, which implies that $\G(x) \G(y)^{-1} u_3$ is parallel to $u_3.$ Thus, $u_3$ is an eigenvector of the matrix $\G(x)\G(y)^{-1}.$ Analogous arguments show that also $u_1$ is an eigenvector of $\G(x) \G(y)^{-1}.$ 
	We show that it is not possible that all vectors from $\E^2 \setminus \{o\}$ are eigenvectors of  $\G(x) \G(y)^{-1}.$  We introduce the centrally symmetric hexagon $H:=\conv (P(x) \cup P(y)).$ After, possibly, interchanging the roles of $x$ and $y,$ we assume that the points $p_1(x), \ p_2(y), \ p_2(x), \ p_3(x)$ are in counterclockwise order on $\bd K$; see Fig.~\ref{HexNormDetermFig}. 

	\begin{FigTab}{cc}{1mm}
	\begin{picture}(45,60)
	\put(-1,5){\IncludeGraph{width=45\unitlength}{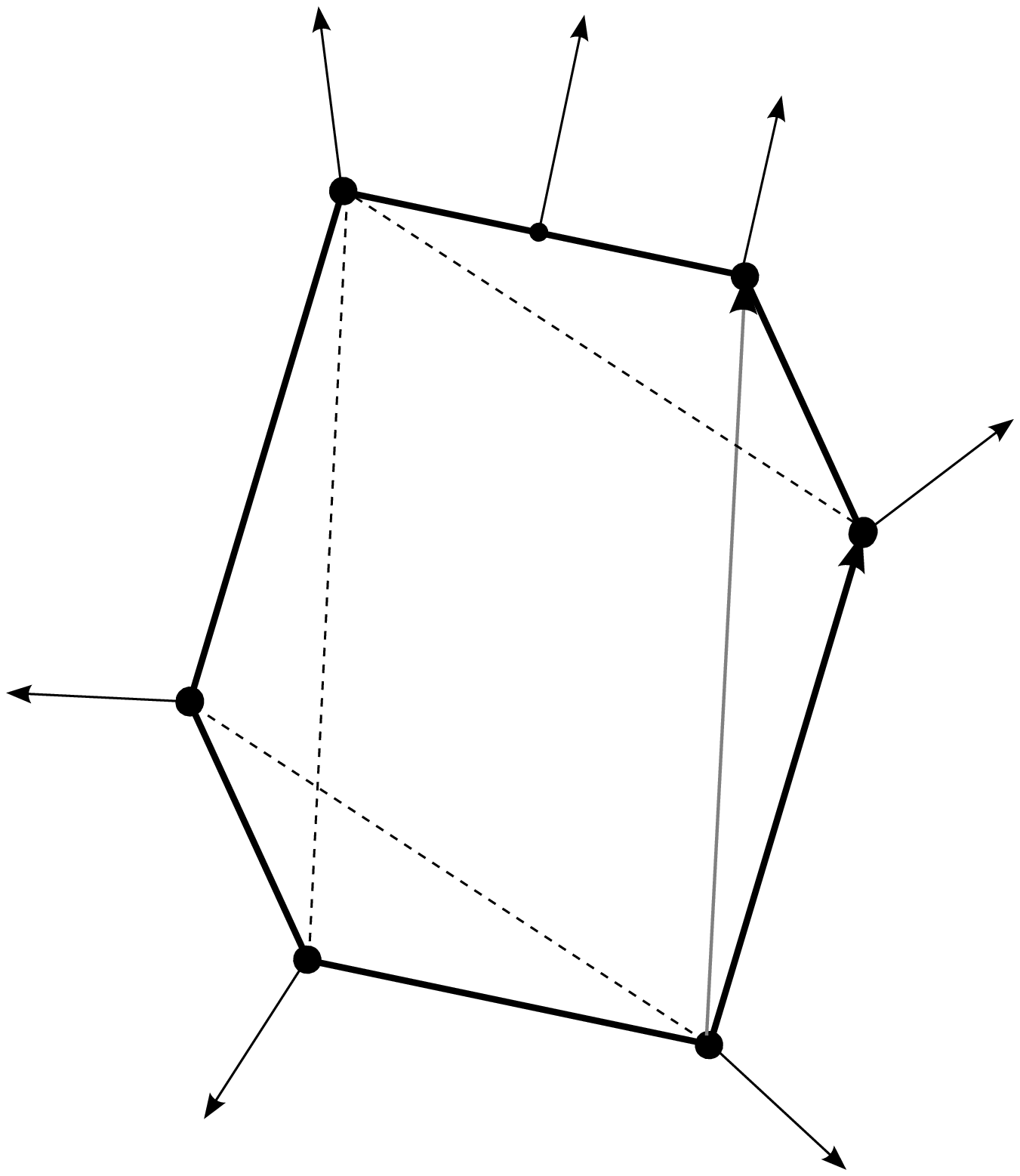}}
	\put(-3,49){\scriptsize $p_1(x)=p_1(y)$}
	\put(22,53){\scriptsize $h$}
	\put(33,10){\scriptsize $p_3(x)=p_3(y)$}
	\put(36,20){\scriptsize $y$}
	\put(31,30){\scriptsize $x$}
	\put(6,13.5){\scriptsize $p_2(x)$}
	\put(2,23){\scriptsize $p_2(y)$}

        \end{picture}
	&
	\begin{picture}(95,60)
	\put(1,7){\IncludeGraph{width=90\unitlength}{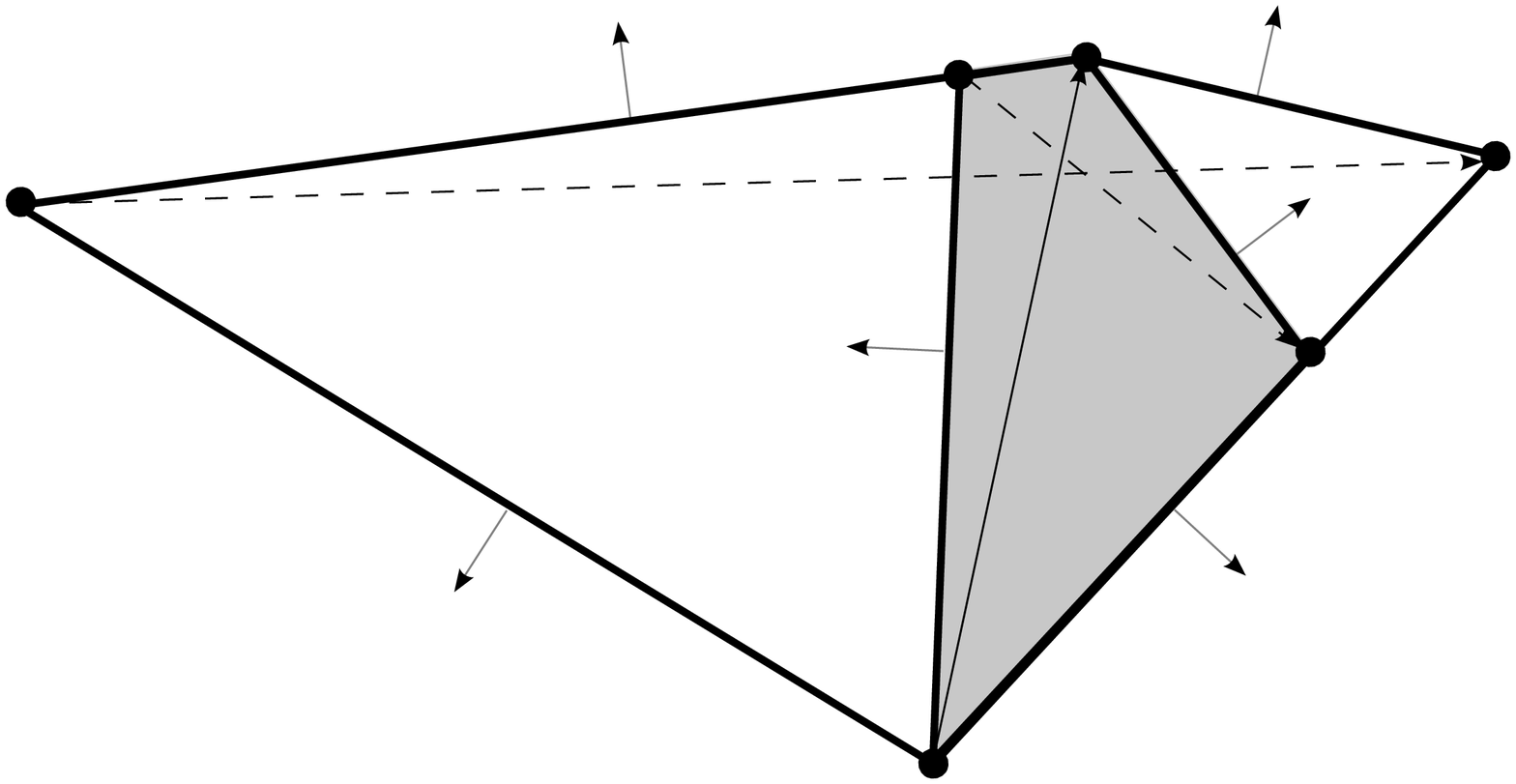}}
	\put(63.5,32){\scriptsize $h$}
	\put(57,5){\scriptsize $q_3$}
	\put(66,52){\scriptsize $q_1$}
	\put(-2,43){\scriptsize $q_2(x,h)$}
	\put(92,42){\scriptsize $q_4(x,h)$}	
	\put(51,50.5){\scriptsize $q_2(y,h)$}
	\put(81,30.5){\scriptsize $q_4(y,h)$}
	\put(25,53){\scriptsize $u_1(x)=u_1(y)$}
	\put(26,16){\scriptsize $u_2(x)$}
	\put(46,32){\scriptsize $u_2(y)$}
	\put(79,53){\scriptsize $u_4(x)$}
	\put(77,18){\scriptsize $u_3(x)=u_3(y)$}

        \end{picture}
	\\
	\parbox[t]{55\unitlength}{\mycaption{The hexagon $H$ and the normals of $K$ at the vertices of $H$ \label{HexNormDetermFig}}}
	&
	\parbox[t]{95\unitlength}{\mycaption{The boundaries of $Q(x,h)$ and $Q(y,h)$ are plotted in bold; $Q(y,h)$ is shaded.\label{HexNormDetermFig2}}}
	\end{FigTab}

Let $h$ be an outward normal of the side $[p_1(x),p_4(x)]$ of $H.$ Let $Q(x,h)$ and $Q(y,h)$ be quadrilaterals constructed as in the statement of Lemma~\ref{HessQuadInterp}. By the choice of $H$ we have $u_1(x)=u_1(y),$ $u_3(x)=u_3(y),$ while $u_2(x)$ follows $u_2(y),$ and $u_4(x)$ follows $u_4(y),$ in counterclockwise order on $\usphere^1.$  Consequently, $[o,h]$ is a common diagonal of $Q(x,h)$ and $Q(y,h),$ while the vertices $q_2(y,h)$ and $q_4(y,h)$ of $Q(y,h)$ lie in the relative interiors of the sides $[q_1(x,h),q_2(x,h)]$ and $[q_3(x,h),q_4(x,h)],$ respectively, of $Q(x,h);$ see Fig.~\ref{HexNormDetermFig2}. The latter implies that the diagonals $[q_2(x,h),q_4(x,h)]$ and $[q_2(y,h),q_4(y,h)]$ of the quadrilaterals $Q(x,h)$ and $Q(y,h),$ respectively, are not parallel. Hence, by Lemma~\ref{HessQuadInterp}, $\G(x) h$ and $\G(y) h$ are not parallel, which implies that the chosen $h$ is not an eigenvector of $\G(x) \G(y)^{-1}.$  Consequently, $\lin u_1(x)$ and $\lin u_3(x)$ are two distinct eigenspaces of $\G(x) \G(y)^{-1},$ and we arrive at the assertion.
\end{proof}

\begin{proposition} \label{HexDetermProp}
	Let $K$ be a strictly convex and $\cclass^1$ regular body in $\E^2.$ Let $H$ be a centrally symmetric convex hexagon with consecutive vertices $h_1, \ldots, h_6$ in counterclockwise order. For $i \in \{1,2,3\}$ we introduce the vectors $x_i:=h_{2i +1} - h_{2 i -1}$;
	 see Fig.~\ref{HexDetermNewFig}.  Then a translate of $H$ is inscribed in $K$ if and only if
	\begin{equation}
		D(x_i) = h_{2i+2}-h_{2i+1}
\label{DHexEq}
	\end{equation}
	for every $i \in \{1,2,3\}$ and 
	\begin{equation}
		\prod_{i=1}^3 \bigl(1+\hessg(x_i) \bigr) \ge 0. \label{1+Delta:Prod}
	\end{equation}
\end{proposition}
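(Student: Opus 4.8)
I would split the proof into the two implications; the forward direction is short and the converse is where the work lies. \emph{Only if:} suppose a translate of $H$ — say $H$ itself — is inscribed in $K$, with $x_1,x_2,x_3\in\IDKO$. Since $K$ is strictly convex and $\cclass^1$ regular and $H$ is convex, the outer unit normals $u(h_1),\dots,u(h_6)$ of $K$ at the vertices of $H$ lie in counterclockwise order on $\usphere^1$, and using the central symmetry of $H$ one checks that the quadruple $(h_{2i+2},h_{2i+4},h_{2i-1},h_{2i+1})$ satisfies the defining relations of $(p_1(K,x_i),\dots,p_4(K,x_i))$ (indeed $h_{2i+1}-h_{2i-1}=x_i=h_{2i+2}-h_{2i+4}$ by central symmetry, and these points are in counterclockwise order), hence equals it. Then $D(x_i)=p_1(K,x_i)-p_4(K,x_i)=h_{2i+2}-h_{2i+1}$, which is \eqref{DHexEq}. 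For \eqref{1+Delta:Prod} I would combine \eqref{OnePlusDelta} with the positivity of each $\detu{j}{j+1}$ from \eqref{07.10.24,13:31}: the sign of $1+\hessg(x_i)$ then equals that of $\detu{1}{3}\cdot\detu{2}{4}$ evaluated at $x_i$, i.e.\ of $\det\bigl(u(h_{2i+2}),u(h_{2i-1})\bigr)\det\bigl(u(h_{2i+4}),u(h_{2i+1})\bigr)$. Multiplying over $i=1,2,3$, each of the three determinants $\det(u(h_j),u(h_{j+3}))$ with $j\in\{1,2,3\}$ occurs exactly twice (up to sign), so $\prod_i(1+\hessg(x_i))$ has the sign of a perfect square, giving \eqref{1+Delta:Prod} (with equality exactly when some main diagonal $[h_j,h_{j+3}]$ is an affine diameter of $K$).

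\emph{If:} assume \eqref{DHexEq} and \eqref{1+Delta:Prod}, so in particular each $x_i\in\IDKO$. By \eqref{DHexEq} with $i=1$ and \eqref{D:repr}, the inscribed parallelogram $P(K,x_1)$ has ordered edge vectors $(x_1,h_4-h_3)$, hence is the translate of the sub-parallelogram $\conv\{h_1,h_3,h_4,h_6\}$ of $H$ that matches it vertex-for-vertex and with the same orientation; after translating $H$ I may assume $p_1(x_1)=h_4$, $p_2(x_1)=h_6$, $p_3(x_1)=h_1$, $p_4(x_1)=h_3$, so that $h_1,h_3,h_4,h_6\in\bd K$ and the centre $z=\tfrac12(h_1+h_4)=\tfrac12(h_3+h_6)$ of $H$ lies in $\intr K$. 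It then suffices to prove $h_2\in\bd K$ (hence $h_5=2z-h_2\in\bd K$) and that $h_1,\dots,h_6$ occur in this cyclic order on $\bd K$; convexity of $H$ then yields $H\subseteq K$. Now $x_3=h_2-h_4=h_1-h_5$, and by \eqref{DHexEq} with $i=3$ the parallelogram $P(K,x_3)$ has edge vectors $x_3$ and $D(x_3)=h_2-h_1$, so $p_1(x_3)-p_3(x_3)=x_3+D(x_3)=2(h_2-z)$; hence $h_2,h_5\in\bd K$ will follow once I know that the centre of $P(K,x_3)$ equals $z$, for then $p_1(x_3)=z+(h_2-z)=h_2$, $p_3(x_3)=2z-h_2=h_5$, and $p_2(x_3)=h_4$, $p_4(x_3)=h_1$. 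The same reduction applies to $P(K,x_2)$ and the diagonal $[h_3,h_6]$.

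The difficulty is that the diagonal $[p_2(x_3),p_4(x_3)]$ of $P(K,x_3)$ is a chord of $K$ parallel to $h_4-h_1$ of the same length as the placed chord $[h_1,h_4]=[p_3(x_1),p_1(x_1)]$, and such a chord need not be unique: it could be the ``other'' chord of that length and direction (the two being interchanged under the reflection across the affine diameter of $K$ parallel to $h_4-h_1$), and likewise for the chords shared, according to \eqref{DHexEq}, by the pairs $P(K,x_1),P(K,x_2)$ and $P(K,x_2),P(K,x_3)$. I would resolve this by a continuity/degree argument: introduce the fixed-point-free involution $\rho$ of $\bd K$ sending $q$ to the second intersection of $\bd K$ with the line $qz$, and the continuous function $\beta(q):=|q-z|-|\rho(q)-z|$, which satisfies $\beta(\rho q)=-\beta(q)$ and vanishes precisely at the $q$ with $2z-q\in\bd K$; it already vanishes at $h_1,h_3,h_4,h_6$. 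Sweeping $q$ over the arc $[h_1,h_3]_K$, tracking the parallelogram $P(K,q-h_4)$ (its existence and continuous dependence on $q$ are Theorem~\ref{frst:drv:cov:thm}), and imposing \eqref{DHexEq} for $i=2,3$ isolates the candidate position of $h_2$; then \eqref{1+Delta:Prod} is exactly what excludes the ``flipped'' branch, since in that branch the sign bookkeeping of the \emph{only if} part — now carried out with the genuine normals of $K$ at the displaced vertices of $P(K,x_2)$ and $P(K,x_3)$ — would force $\prod_i(1+\hessg(x_i))<0$. \textbf{The main obstacle} I anticipate is precisely this last step: relating the sign of $\det(u(a+t),u(b+t))$ for a displaced chord $[a+t,b+t]$ to that of $\det(u(a),u(b))$, and checking that a nontrivial displacement in one of the three diagonals flips an odd number of the six determinant factors, so that the hypothesis \eqref{1+Delta:Prod} really does rule the flipped configuration out; the bookkeeping of the indices and the careful verification that the constructed six points sit in the correct cyclic order on $\bd K$ would be the remaining routine-but-delicate parts.
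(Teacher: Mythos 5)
Your \emph{only if} half is correct and is essentially the paper's argument: the vertex identification $p_1(x_i)=h_{2i+2},\,p_2(x_i)=h_{2i-2},\,p_3(x_i)=h_{2i-1},\,p_4(x_i)=h_{2i+1}$ gives \eqref{DHexEq}, and your pairing of the six normal determinants is exactly the paper's observation that $\detuvar{1}{3}{x_{i+1}}=\detuvar{2}{4}{x_i}$, so that in the factorization $\prod_{i}\bigl(1+\hessg(x_i)\bigr)=s_1s_2/s_3$ (with $s_3>0$ by \eqref{07.10.24,13:31}) one has $s_1=s_2$, hence \eqref{1+Delta:Prod}.

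The \emph{if} half, however, has a genuine gap, and you flag it yourself: the whole content of this implication is the sign argument excluding the ``flipped'' chord positions, and your proposal only anticipates it as ``the main obstacle'' rather than proving it, while the sweep/involution machinery ($\rho$, $\beta$, tracking $P(K,q-h_4)$) is neither carried out nor needed. The paper closes this with a short contradiction argument built on a dichotomy your sketch is missing. Assume \eqref{DHexEq} and \eqref{1+Delta:Prod} hold but no translate of $H$ is inscribed in $K$. By \eqref{DHexEq}, each $P(x_i)$ equals $a_i+\conv\{h_{2i-2},h_{2i-1},h_{2i+1},h_{2i+2}\}$ for some $a_i$. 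If any two of the parallelograms $P(x_i),P(x_j)$ share a diagonal, then $\conv\bigl(P(x_i)\cup P(x_j)\bigr)$ is already a translate of $H$ inscribed in $K$, a contradiction; note that this single observation removes your worry about a displacement occurring ``in one of the three diagonals'' and the attendant parity bookkeeping, since one coincidence ends the proof. In the remaining case, for \emph{every} $i$ the diagonals $[p_1(x_{i+1}),p_3(x_{i+1})]$ and $[p_2(x_i),p_4(x_i)]$ are distinct chords of $K$ that are translates of one another (both translates of $[h_{2i+1},h_{2i-2}]$), and strict convexity yields $\sign\detuvar{1}{3}{x_{i+1}}=-\sign\detuvar{2}{4}{x_i}\ne 0$ simultaneously for all three $i$ (two distinct parallel congruent chords lie on opposite sides of the affine diameter in that direction, so the normals at their endpoints wind oppositely); hence $s_1$ and $s_2$ have opposite nonzero signs, $s_3>0$, and $\prod_i\bigl(1+\hessg(x_i)\bigr)<0$, contradicting \eqref{1+Delta:Prod}. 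This displaced-chord sign relation is precisely the step you left open, and without it (or the dichotomy that makes it suffice) your sufficiency argument does not go through.
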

	\begin{FigTab}{c}{1mm}
	        \begin{picture}(48,42)
	\put(3,2){\IncludeGraph{width=40\unitlength}{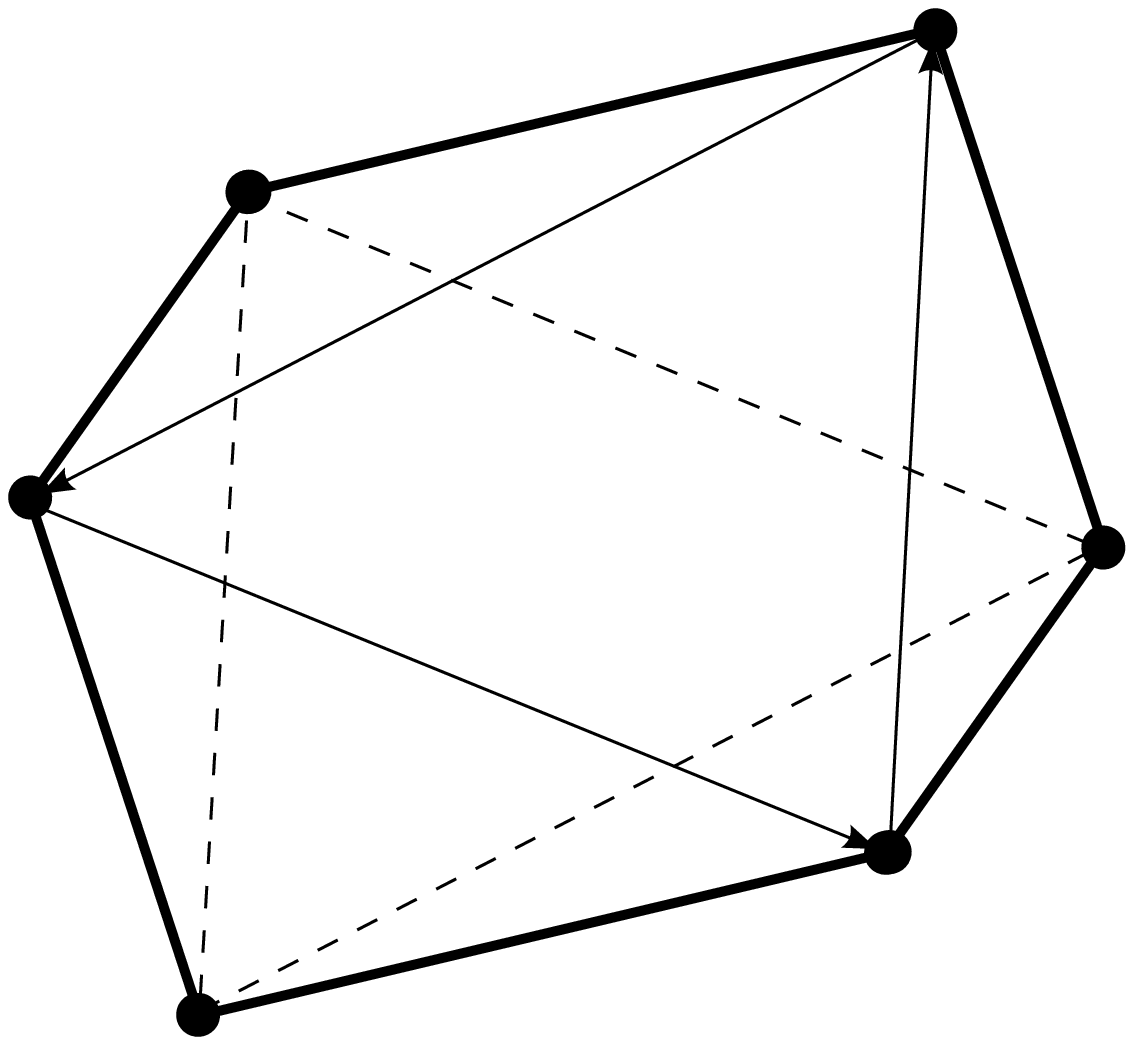}}
	\put(39,39){\scriptsize $h_1$}
	\put(9.5,33.5){\scriptsize $h_2$}
	\put(0.5,21){\scriptsize $h_3$}
	\put(8,0){\scriptsize $h_4$}
	\put(36.5,5.5){\scriptsize $h_5$}
	\put(45,18){\scriptsize $h_6$}	
	\put(25,30){\scriptsize $x_1$}
	\put(21,12){\scriptsize $x_2$}
	\put(33.5,27){\scriptsize $x_3$}

        \end{picture}
	\\
	\parbox[t]{0.90\textwidth}{\mycaption{\label{HexDetermNewFig} The hexagon $H$ and the vectors $x_1$, $x_2$ and $x_3$.}}
	\end{FigTab}
\begin{proof}	
	Let us show the necessity. Since conditions \eqref{DHexEq} and \eqref{1+Delta:Prod} are invariant with respect to translations  of $H,$ we can assume that $H$ itself is inscribed in $K.$ From the definition of $x_i$ and $p_j$ it follows that
	\begin{equation} \label{hrep}
	\begin{array}{ccc}
		p_1(x_i) = h_{2i+2}, & \qquad & p_2(x_i)=h_{2i-2}, \\
		p_3(x_i) = h_{2i-1}, & \qquad & p_4(x_i) = h_{2i+1},
	\end{array}
	\end{equation}
	where $i \in \integer.$ Thus, \eqref{DHexEq} follows directly form \eqref{hrep} and the definition of the function $D.$ Let us obtain \eqref{1+Delta:Prod}. By \eqref{OnePlusDelta} we have 
	\begin{equation*}
		s:=\prod_{i=1}^3 \bigl(1 + \hessg(x_i) \bigr) = \frac{s_1 \, s_2}{s_3} ,
	\end{equation*}
	where
	\begin{align*}
		s_1 &:= \prod_{i=1}^3 \detuvar{1}{3}{x_i}, \\
		s_2 &:= \prod_{i=1}^3 \detuvar{2}{4}{x_i}, \\
		s_3 &:= \prod_{i=1}^3 \detuvar{1}{2}{x_i} \, \detuvar{3}{4}{x_i}. 
	\end{align*}
	The determinants
	$\detuvar{1}{2}{x_i}$ and $\detuvar{3}{4}{x_i}$ are strictly positive; see \eqref{07.10.24,13:31}. Consequently $s_3 > 0.$
From \eqref{hrep} we get the equalities $p_1(x_{i+1})=p_2(x_i)$ and $p_3(x_{i+1})=p_4(x_i)$ and by this also the equalities
$$\detuvar{1}{3}{x_{i+1}}=\detuvar{2}{4}{x_i}$$ for $i \in \{1,2,3\}.$ Hence we see that $s_1=s_2$ and therefore $s \ge 0.$

	Now let us show the sufficiency by contradiction. Assume that for some $K$ and $H,$ satisfying the assumptions of the proposition, conditions \eqref{DHexEq} and \eqref{1+Delta:Prod} are fulfilled but no translate of $H$ is inscribed in $K.$ By \eqref{DHexEq} we see that for every $i \in \integer$ the parallelogram $P(x_i)$ is a translate of $\conv \{h_{2i-2},h_{2i-1},h_{2i+1},h_{2i+2} \}$ and, moreover, one has
	\begin{equation} \label{hrep:trans}
	\begin{array}{ccc}
		p_1(x_i) = a_i+ h_{2i+2}, & \qquad & p_2(x_i)= a_i + h_{2i-2}, \\
		p_3(x_i) = a_i + h_{2i-1}, & \qquad & p_4(x_i) = a_i + h_{2i+1},
	\end{array}
	\end{equation}
	with appropriate $a_i \in \E^2.$ If for some $i, j \in \{1,2,3\}$ with $i \ne j$ the parallelograms $P(x_i),$ $P(x_j)$ share a diagonal, it follows that $H$ is a translate of  $\conv \bigl(P(x_i) \cup P(x_j) \bigr),$ a contradiction. Now we consider the case when no two distinct parallelograms $P(x_i)$ and $P(x_j)$ share a diagonal. Let $i \in \integer.$ In view of \eqref{hrep:trans}, we get that 
	$$h_{2i-2}-h_{2i+1}=p_1(x_{i+1})-p_3(x_{i+1})=p_2(x_i)-p_4(x_i).$$
	Thus the diagonals  $[p_1(x_{i+1}),p_3(x_{i+1})]$ and $[p_2(x_i),p_4(x_i)]$ of $P(x_{i+1})$ and $P(x_i),$ respectively, are translates of each other. By the assumption, these diagonals are distinct. Thus, $[p_1(x_{i+1}),p_3(x_{i+1})]$ and $[p_2(x_i),p_4(x_i)]$ are distinct chords of $K$ which are translates of $[h_{2i+1},h_{2i-2}];$ see Fig.~\ref{ThreeHexagons}. 

	\begin{FigTab}{c}{1.2mm}
	        \begin{picture}(48,44)
	\put(3,4){\IncludeGraph{width=40\unitlength}{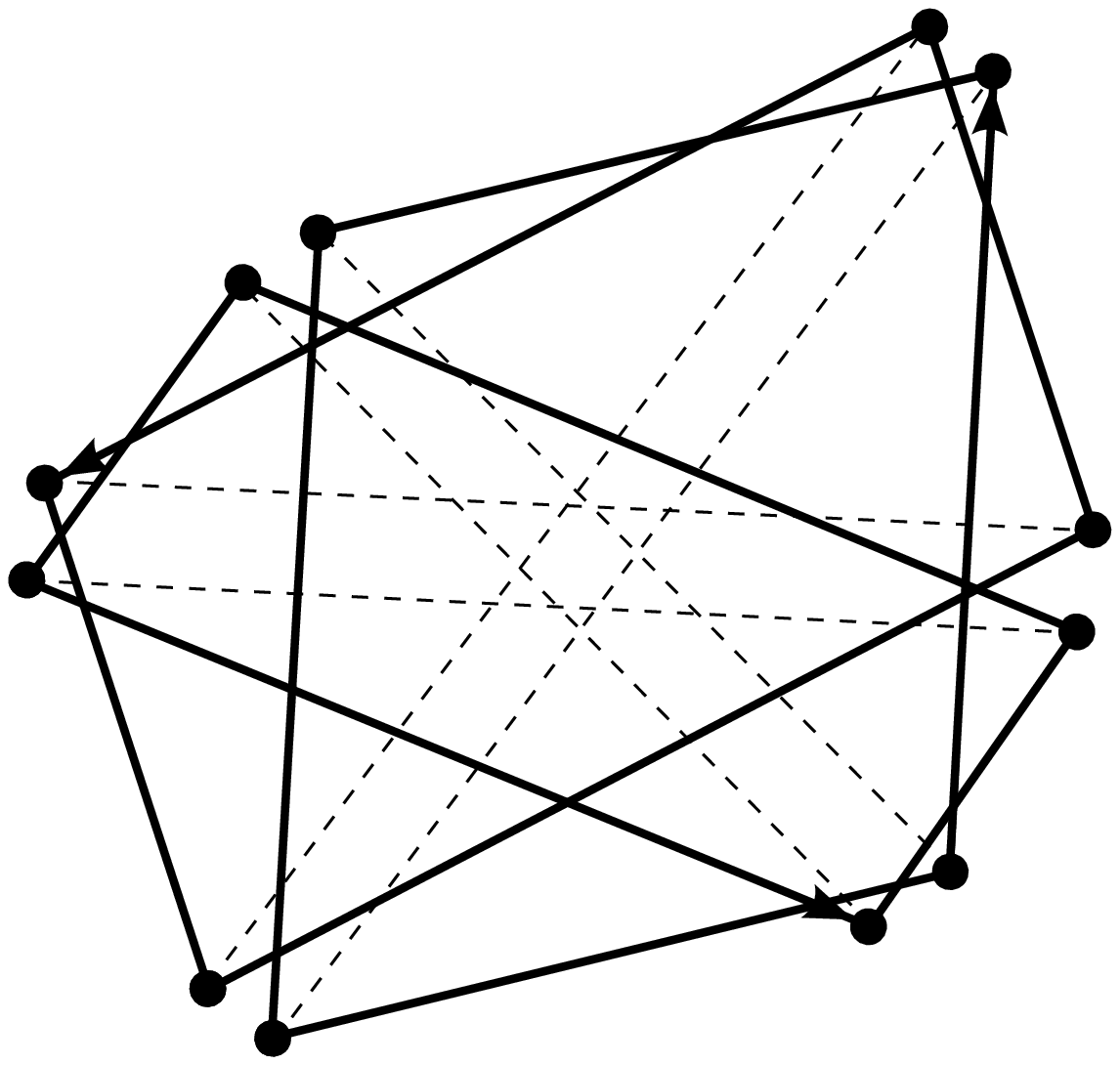}}
	\put(32,43){\scriptsize $p_3(x_1)$}
	\put(-3,26){\scriptsize $p_4(x_1)$}
	\put(2,5){\scriptsize $p_1(x_1)$}
	\put(45.5,23){\scriptsize $p_2(x_1)$}
	\put(-4.3,20){\scriptsize $p_3(x_2)$}
	\put(30,5.8){\scriptsize $p_4(x_2)$}
	\put(44,18){\scriptsize $p_1(x_2)$}
	\put(3.7,32){\scriptsize $p_2(x_2)$}
	\put(40,10){\scriptsize $p_3(x_3)$}	
	\put(41.8,40){\scriptsize $p_4(x_3)$}	
	\put(12,36){\scriptsize $p_1(x_3)$}	
	\put(12,1.5){\scriptsize $p_2(x_3)$}	

        \end{picture}
	\\
	\parbox[t]{0.90\textwidth}{\mycaption{The parallelograms $P(x_1), P(x_2), P(x_3)$ and  their diagonals.\label{ThreeHexagons}}}
	\end{FigTab}

The strict convexity of $K$ implies that  $$\sign \detuvar{1}{3}{x_{i+1}} = - \sign \detuvar{2}{4}{x_i} \ne 0$$ for $1 \le i \le 3.$ The latter yields $\sign s_1 = - \sign s_2 \ne 0.$ But since $s_3>0$ we obtain that $s<0,$ a contradiction to \eqref{1+Delta:Prod}.
\end{proof}

\begin{proof}[Proof of Proposition~\ref{u1u3:determ}]
	First we show that there exists $y \in \IDKO$ with $y \ne x$ such that $p_i(x)=p_i(y)$ for $i \in \{1,3\}.$  Let $c$ be the center of $P(x)$ and ${K_c}$ be the reflection of $K$ with respect to $c;$ see Fig.~\ref{HexNoAffDiag}.   Assume first that $1 + \hessg(x) >0.$ Then, in view of \eqref{OnePlusDelta} and \eqref{07.10.24,13:31}, $$\sign \detuvar{1}{3}{x} = \sign \detuvar{2}{4}{x} \ne 0.$$ 
	Therefore $\bd K$ and $\bd {K_c}$ intersect transversally at $p_i(x)$ for every $i \in \{1,\ldots,4\}.$ Moreover, either a small subarc of $[p_1(x),p_2(x)]_K$  with endpoint $p_1(x)$ is contained in ${K_c}$ and a small subarc of $[p_1(x),p_2(x)]_K$ with endpoint $p_2(x)$ is contained in $\E^2 \setminus \intr {K_c}$ or vice versa (that is, a small subarc of $[p_1(x),p_2(x)]_K$  with endpoint $p_1(x)$ is contained in $\E^2 \setminus \intr {K_c}$ and a small subarc of $[p_1(x),p_2(x)]_K$ with endpoint $p_2(x)$ is contained in ${K_c}$). Consequently, the arcs $[p_1(x),p_2(x)]_K$ and $[p_1(x),p_2(x)]_{K_c}$ intersect at some point $q$ distinct from $p_1(x)$ and $p_2(x).$ We define $y := p_1(x)-q.$ By construction,  $p_1(x), q, p_3(x),$ and $2 c - q$ are consecutive vertices of $P(y)$; see Fig.~\ref{HexNoAffDiag}. Therefore $y$ satisfies the desired conditions. 

    \begin{FigTab}{c}{2mm}
     \begin{picture}(38,24)
        \put(5,2){\ExternalFigure{0.31}{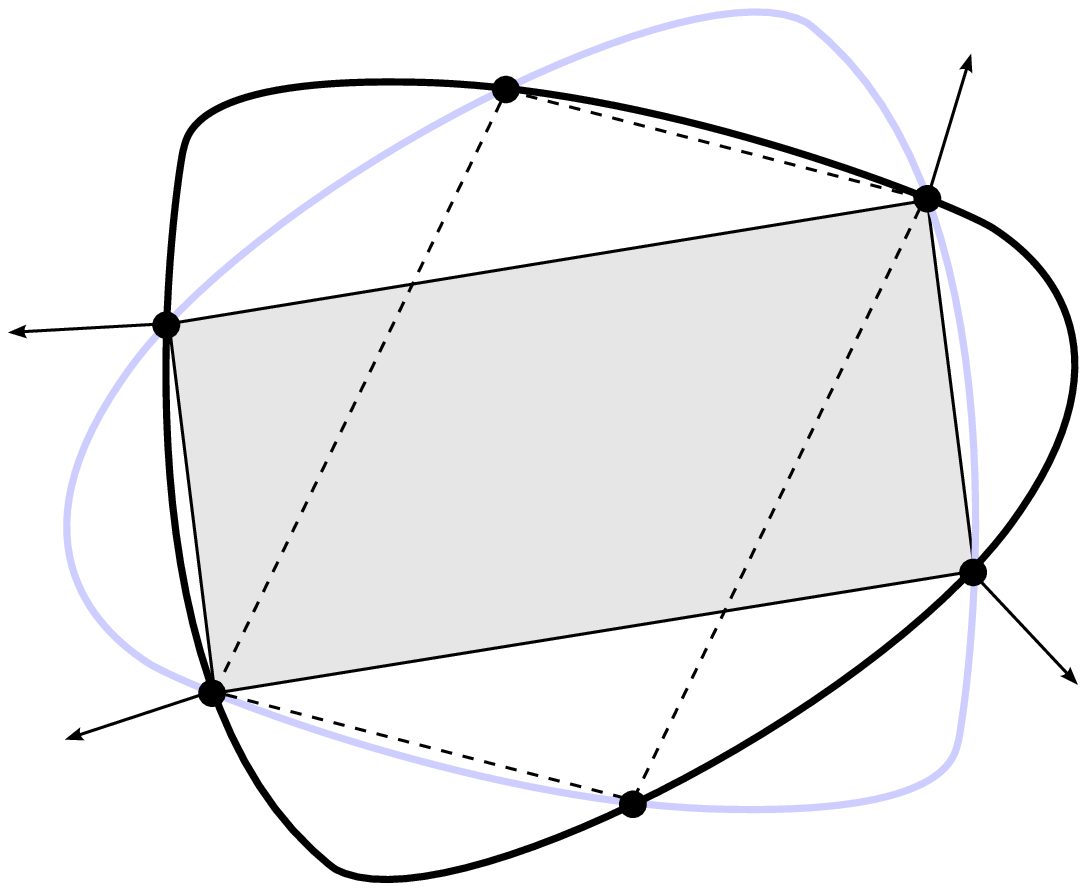}}
	\put(23.5,16.5){\scriptsize $p_1(x)$}
	\put(10,14){\scriptsize $p_2(x)$}
	\put(10.3,7.7){\scriptsize $p_3(x)$}
	\put(24.5,9.5){\scriptsize $p_4(x)$}
	\put(16.5,21.5){\scriptsize $q$}	
	\put(27,22){\scriptsize $u_1(x)$}
	\put(1.5,15){\scriptsize $u_2(x)$}
	\put(3,5){\scriptsize $u_3(x)$}
	\put(30.5,6){\scriptsize $u_4(x)$}
	\put(20,2){\scriptsize $2c-q$}

        \end{picture} \\
	\parbox[t]{0.90\textwidth}{\mycaption{The bodies $K$ and $K_c$ and the parallelograms $P(x)$ and $P(y).$\label{HexNoAffDiag}}}
    \end{FigTab}

	In the case $1 + \det \G(x) <0$ we can use similar arguments showing that the arcs $[p_4(x),p_1(x)]_K$ and $[p_4(x),p_1(x)]_{K_c}$ intersect at some point $q$ distinct from $p_2(x)$ and $p_3(x).$ Thus, for that case we can define $y:= q - p_3(x).$ 

	Now let $L$ be a strictly convex and $\cclass^1$ regular planar convex body with the same covariogram as $K.$ By Proposition~\ref{HexDetermProp}, a translate of $H:=\conv \bigl(P(K,x) \cup P(K,y)\bigr)$ is inscribed in $L.$  Without loss of generality we assume $H$ itself is inscribed in $L,$ that is, $P(K,x)=P(L,x)$ and $P(K,y)=P(L,y).$ Notice that the inequality $1 + \det \G(x) \ne 0$ implies  that $[p_1,p_3]$ is not an affine diameter of $K$ or $L.$ Then, by Lemma~\ref{u1u3eigv}, we have $\{u_1(K,x),-u_3(K,x)\}=\{u_1(L,x),-u_3(L,x)\},$ and we are done.
\end{proof}

\begin{proof}[Proof of Proposition~\ref{ArcDetermProp}] 
	By Theorem~\ref{central symmetry}, if $K$ is centrally symmetric, then so is $L.$ In this case $K$ and $L$ are translates of $\frac{1}{2} \supp g_K = \frac{1}{2} \supp g_L,$ and the proof is concluded. 

	Now assume that $K$ is not centrally symmetric. Then, by Theorem~\ref{central symmetry}, there exists $x_0 \in \IDKO,$ such that $\det G(x_0) \ne -1.$ This implies $u_1(K,x_0) \ne -u_3(K,x_0).$ Let $N_1$ and $N_3$ be disjoint open neighborhoods of $u_1(K,x_0)$ and $-u_3(K,x_0),$ respectively. In view of Theorem~\ref{frst:drv:cov:thm} (Part~\ref{Ptrans:prop}) and Proposition~\ref{u1u3:determ}, replacing $L$ by an appropriate translation or reflection, we can assume that $P(K,x_0)=P(L,x_0)$ and $u_i(K,x_0)=u_i(L,x_0)$ for $i \in \{1,3\}.$ Let $q(t), \ 0 \le t \le 1,$ be a continuous, counterclockwise parametrization of a small boundary arc of $K$ such that $q(0)=p_4(K,x_0)$ and, for $x(t):=q(t)-p_3(K,x_0)$, one has $\det G\bigl(x(t)\bigr) \ne -1$, $u_1\bigl(K,x(t)\bigr) \in N_1$ and $-u_3\bigl(K,x(t)\bigr) \in N_3$ for every $0 \le t \le 1$.  

We show by contradiction that for every $0 \le t \le 1$ the equalities 
\begin{equation} \label{07.11.19,09:41}
	u_i\bigl(K,x(t)\bigr)=u_i\bigl(L,x(t)\bigr), \ \ i \in \{1,3\},
\end{equation} are fulfilled. Assume the contrary. Then, by Proposition~\ref{u1u3:determ}, there exists $t_1$ with $0 < t_1 \le 1$ such that $u_1\bigl(K,x(t_1)\bigr) = -u_3\bigl(L,x(t_1)\bigr)$ and $-u_3\bigl(K,x(t_1)\bigr)=u_1\bigl(L,x(t_1)\bigr).$ 
In particular we have $u_1\bigl(L,x(t_1)\bigr)\in N_3$. Since $u_1\bigl(L,x(0)\bigr) = u_1\bigl(K,x(0)) \in N_1$ and since $N_1$ and $N_3$ are disjoint, 
%But in view of the equalities $u_1\bigl(K,x(0))=u_1\bigl(L,x(0)\bigr),$ and the choice of $N_1$ and $N_3$ 
there exists $t_2$ with $0 < t_2 < t_1$ such that $u_1\bigl(L,x(t_2)\bigr)$ lies outside $N:=N_1 \cup N_3.$ Hence $  \{u_1\bigl(L,x(t_2)\bigr),-u_3\bigl(L,x(t_2)\bigr)\} \not\subseteq N.$ But, by construction, we have  $\{u_1\bigl(K,x(t_2)\bigr), -u_3\bigl(K,x(t_2)\bigr)\} \subseteq N,$ a contradiction to Proposition~\ref{u1u3:determ}. 

The definition of $x(t)$ implies  $p_3(K,x(t))=p_3(K,x_0)$, for each $t \in [0,1]$, and therefore, it also implies $u_3(K,x(t))=u_3(K,x_0)$. Hence, in view of \eqref{07.11.19,09:41}, we get $u_3(L,x(t))=u_3(L,x_0).$ Consequently, by the strict convexity of $L$, we also have $p_3(L,x(t))=p_3(L,x_0)$.
%Thus, by the choice of $x(t),$ we have $u_3(x_0)=u_3\bigl(K,x(t)\bigr)=u_3\bigl(L,x(t)\bigr)$ for $1 \le t \le 1.$ Consequently $p_3(x_0)=p_3\bigl(K,x(t)\bigr)=p_3\bigl(L,x(t)\bigr)$ and, moreover, $P\bigl(K,x(t)\bigr)= P\bigl(L,x(t)\bigr)$ for $0 \le t \le 1.$ 
The latter implies 
	$$
		\bigl[p_4\bigl(K,x(0)\bigr),p_4\bigl(K,x(1)\bigr)\bigr]_K = \bigl[p_4\bigl(L,x(0)\bigr),p_4\bigl(L,x(1)\bigr)\bigr]_L,
	$$
	and concludes the proof.
\end{proof}

\small

\newcommand{\etalchar}[1]{$^{#1}$}
\def\cprime{$'$} \def\cprime{$'$} \def\cprime{$'$} \def\cprime{$'$}
\providecommand{\bysame}{\leavevmode\hbox to3em{\hrulefill}\thinspace}
\providecommand{\MR}{\relax\ifhmode\unskip\space\fi MR }
% \MRhref is called by the amsart/book/proc definition of \MR.
\providecommand{\MRhref}[2]{%
  \href{http://www.ams.org/mathscinet-getitem?mr=#1}{#2}
}
\providecommand{\href}[2]{#2}

\begin{tabular}{lr}
\begin{tabular}{l}
  Gennadiy Averkov, \\
  Faculty of Mathematics, \\
  University of Magdeburg, \\ Universit\"atsplatz 2, \\
  D-39106 Magdeburg, \\
  Germany \\
  \emph{e-mail:} gennadiy.averkov@googlemail.com
\end{tabular}
&
\begin{tabular}{l}
  Gabriele Bianchi, \\
  Department of Mathematics, \\
  Universit\`{a} di Firenze, \\ Viale Morgagni  67a, \\	
  50134 Firenze, \\
  Italy \\
  \emph{e-mail:} gabriele.bianchi@unifi.it
\end{tabular}
\end{tabular}
\end{document}